\newcommand{\tabcaption}{\def\@captype{table}\caption}
\newtheorem{theo}{Theorem}
\newtheorem{lem} [theo]{Lemma}
\newtheorem{cor}[theo]{Corollary}
\newtheorem{prop}[theo]{Proposition}
\makeatletter \@addtoreset{equation}{section}
\numberwithin{equation}{section}
\def\Oge{\mathop{\Omega}_{\geq}}
\def\N{\mathbb{N}}
\def\Z{\mathbb{Z}}
\def\x{\mathbf{x}}
\def\P{\mathbb{P}}
\def\a{{\pmb{\alpha}}}
\def\b{{\pmb{\beta}}}
\def\c{{\pmb{\gamma}}}
\newcommand{\rank}{\operatorname{rank}}
\def\N{\mathbb{N}}
\def\Z{\mathbb{Z}}
\def\R{\mathbb{R}}
\title{On Magic Distinct Labellings of Simple Graphs}
\author{Guoce Xin$^{1,*}$, Xinyu Xu$^{2}$, Chen Zhang$^{3}$ and Yueming Zhong$^{4}$}
 \address{ $^{1,2,3,4}$School of Mathematical Sciences, Capital Normal University,
 Beijing 100048, PR China}
\email{$^1$\texttt{guoce\_xin@163.com}\ \& $^2$\texttt{xinyu0510x@163.com} \ \& $^3$\texttt{chenzh7663@gmail.com}\ \& $^4$\texttt{zhongyueming107@gmail.com}}
\date{ \today}
\thanks{$*$ This work was partially supported by NSFC(12071311).}
\begin{document}

\begin{abstract}
A magic labelling of a graph $G$ with magic sum $s$
is a labelling of the edges of $G$ by nonnegative integers such that for each vertex $v\in V$,
the sum of labels of all edges incident to $v$ is equal to the same number $s$. Stanley
gave remarkable results on magic labellings, but the distinct labelling case is much more complicated.
We consider the complete construction of all magic labellings of a given graph $G$. The idea
is illustrated in detail by dealing with three regular graphs. We give combinatorial proofs.
The structure result was used to enumerate the corresponding magic distinct labellings.
\end{abstract}

\maketitle

\vspace{-5mm}
\maketitle

\noindent
\begin{small}
 \emph{Mathematic subject classification}: Primary 05A19; Secondary 11D04; 05C78.
\end{small}

\noindent
\begin{small}
\emph{Keywords}: magic labelling; magic square; linear Diophantine equations.
\end{small}

\section{Introduction}
Throughout this paper, we use standard set notations $\R, \Z, \N, \P$ for real numbers, integers, nonnegative integers, and positive integers respectively.

Let $G=(V,E)$ be a finite (undirected) graph with vertex set $V=\{v_1,\dots,v_m\}$ and edge set $E=\{a_1,\dots, a_n\}$.
A magic labelling of $G$
is a labelling of the edges in $E$ by nonnegative integers such that for each vertex $v\in V$,
the weight $wt(v)$ of $v$, defined to be the sum of labels of all edges incident to $v$, is equal to the same number $s$, called magic sum (also called
index by some authors).
More precisely, let $\mu: E\mapsto \N$ be the labelling. Then
\begin{align}
  \label{e-magic-sum}
  wt(v_i):= \sum_{j=1,(v_i,v_j)\in E }^m   \mu(v_i,v_j) =s, \qquad i=1,2,\dots, m.
\end{align}
A magic distinct labelling is a magic labelling whose labels are distinct. It is said to be pure if the labels are
$1,2,\dots, n$.
These concepts were introduced by graph theorists
as an analogous of magic squares,
which have been objects of study for centuries.

Plenty work has been done for graph labellings. Magic labellings of simple graphs seem first introduced in  \cite{MacDougall} as vertex magic
labellings. Vertex magic total labelling of a simple graph indeed corresponds to our magic labelling of the same graph with a loop attached to each vertex. For related research, see \cite{Nissankara,Matthias1,Matthias2,Arnold,Kotzig,Baker2,xin2}. Note that
``magic" may have different meaning in different context.

In the 1970s, Stanley \cite{Stanley-magiclabelings} proved some remarkable facts for
magic labellings:
\begin{theo}
Let $G$ be a finite graph and define $h_G(s)$ to be the number of magic labellings of
$G$ of index $s$. There exist polynomials $P_G(s)$ and $Q_G(s)$ such that $h_G(s) = P_G(s) + (-1)^s Q_G(s)$. Moreover,
if the graph obtained by removing all loops from $G$ is bipartite, then $Q_G(s) = 0$, i.e., $h_G(s)$ is a polynomial
of $s$.
\end{theo}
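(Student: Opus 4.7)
The plan is to realise $h_G(s)$ as the Hilbert function of a graded affine semigroup and extract a rational generating function. Let $A \in \Z^{V \times E}$ denote the vertex--edge incidence matrix of $G$ (a loop at $v$ contributing $2$ to row $v$), and define
\[
M_G := \{\mu \in \N^E : A\mu = s(\mu) \mathbf{1} \text{ for some } s(\mu) \in \N\}.
\]
This is the monoid of all magic labellings, graded by magic sum $s$. By Gordan's lemma $M_G$ is finitely generated, and the Hilbert series $H_G(q) = \sum_{s \geq 0} h_G(s) q^s$ is a rational function of the form $N(q)/\prod_i (1 - q^{s_i})$, where $s_1,\dots, s_k$ are the magic sums of a Hilbert basis. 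Equivalently, by Ehrhart's theorem, $h_G$ is the Ehrhart quasi-polynomial of the rational polytope
\[
\mathcal{P} := \{x \in \R^E_{\geq 0} : Ax = \mathbf{1}\},
\]
whose period divides the least common multiple of the denominators of the vertices of $\mathcal{P}$.

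Writing $h_G(s) = P_G(s) + (-1)^s Q_G(s)$ is precisely the statement that this period divides $2$. The key claim is therefore that every vertex of $\mathcal{P}$ has rational coordinates with denominators dividing $2$. The geometric content is the following decomposition principle: every magic labelling of magic sum $s \geq 3$ can be written as the sum of two magic labellings of strictly smaller positive magic sums, one of which has magic sum $1$ or $2$. Combinatorially, a magic labelling of sum $s$ gives an $s$-regular multigraph on $V$ (edges of weight $k$ becoming $k$ parallel copies, and loops counted with multiplicity $2$), from which one extracts either a $1$-factor (magic sum $1$, a perfect matching) or a $2$-factor (magic sum $2$, a spanning union of cycles, produced via Petersen's $2$-factor theorem for even $s$). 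Iterating, the Hilbert basis consists entirely of indecomposable labellings whose magic sums lie in $\{1, 2\}$, giving $H_G(q) = N(q)/((1-q)^a(1-q^2)^b)$; partial fractions then yield the desired form $h_G(s) = P_G(s) + (-1)^s Q_G(s)$.

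For the refinement, if the loopless part of $G$ is bipartite then K\"onig's theorem guarantees that every regular bipartite multigraph has a perfect matching, so one can always peel off a magic labelling of sum~$1$. Iterating, the Hilbert basis consists only of sum-$1$ elements, so $H_G(q) = N(q)/(1-q)^a$ has all poles at $q = 1$, making $h_G(s)$ a genuine polynomial and forcing $Q_G \equiv 0$. Loops at a vertex $v$ are eliminated via the substitution $\mu(\ell) = \tfrac12(s - r_v)$, where $r_v$ is the non-loop weight at $v$; a parity check shows the resulting count is still polynomial. The main obstacle is the decomposition principle for \emph{odd}-regular multigraphs in the non-bipartite case: such a multigraph may possess neither a $1$-factor (by a Tutte-type obstruction from cut vertices) nor a $2$-factor, so the decomposition must be carried out on the weighted labelling rather than on the underlying multigraph. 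Resolving this requires a sharpened factor theorem for weighted incidence structures, or equivalently a careful analysis of how odd cycles interact with the cofactors arising in Cramer's rule on $A_B^{-1}\mathbf{1}$; it is this analysis that pins down denominator $2$ (rather than $|\det A_B|$ itself, which may be larger) and constitutes the technical heart of the theorem.
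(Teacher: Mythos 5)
First, note that the paper does not prove this statement at all: it is Stanley's theorem, quoted from \cite{Stanley-magiclabelings}, so your proposal can only be judged on its own merits. Judged that way, it has a genuine gap, and you name it yourself: the ``decomposition principle'' that every magic labelling of sum $s\ge 3$ splits off a summand of magic sum $1$ or $2$ is false as stated, and your final paragraph concedes that repairing it ``constitutes the technical heart of the theorem,'' i.e.\ the heart is left unproved. Concretely, take a $3$-regular multigraph with a cut vertex joined to three odd lobes (e.g.\ a central vertex joined to three gadgets, each a doubled edge $xy$ plus a vertex $z$ adjacent to $x$ and $y$): it has no perfect matching, hence no $2$-factor either, so its all-ones labelling (magic sum $3$) is an indecomposable element of the monoid. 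Thus the Hilbert basis is \emph{not} confined to degrees $1$ and $2$, and no factor theorem on the underlying $s$-regular multigraph will make it so. The correct key fact, which is what Stanley actually uses, concerns only the extreme rays (his ``completely fundamental'' solutions), equivalently the vertices of the polytope $\{x\in\R^E_{\ge 0}: Ax=\mathbf{1}\}$: these are half-integral, supported on a disjoint union of edges and odd cycles, a classical polyhedral fact proved by an alternating even/odd cycle perturbation argument, with integrality (total unimodularity / K\"onig) in the bipartite case. From that, Ehrhart theory or Stanley's pole analysis gives quasi-period dividing $2$, respectively $1$; your write-up never establishes this half-integrality, so the proof is incomplete exactly where the theorem lives. (A secondary point: to get $h_G(s)=P_G(s)+(-1)^sQ_G(s)$ for \emph{all} $s$ rather than eventually, one still needs something like Stanley's transfer results or a decomposition into shifted free monoids of the kind this paper constructs for its examples; generation degrees of a Hilbert basis alone do not give it.)

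There is also a convention error that actually falsifies the bipartite half of the statement in your setup. You let a loop contribute $2$ to its row of the incidence matrix, but in this paper (and in Stanley's theorem) the weight $wt(v)$ counts the label of a loop at $v$ once, cf.\ \eqref{e-magic-sum} and the remark that vertex-magic total labellings correspond to magic labellings with a loop at each vertex. Under your convention, a single vertex with one loop has $h_G(s)=1$ for $s$ even and $0$ for $s$ odd, while the graph obtained by deleting the loop is (trivially) bipartite, contradicting the claim $Q_G=0$; your sketched fix via $\mu(\ell)=\tfrac12(s-r_v)$ and ``a parity check'' cannot work. With the loop-counted-once convention the bipartite statement is fine, but your argument must then be restated for that system.
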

In terms of generating functions, the theorem asserts that $F^G(y)=\sum_{s\ge 0} H_G(s) y^s$ is a rational function
with denominator factors $1\pm y$.

Though magic labellings of graphs behave nicely, magic distinct labellings of graphs behave very badly
because of the ``distinct" condition on the labels. For instance, for the graph $G_4$ depicted in Figure \ref{fig:RegularGraph6-9-2},
the generating function for magic labellings is $\frac{1+y+{y}^{2}} {\left({1-y}\right)^{5}}$, but
the generating function for magic distinct labellings is
$$\frac{72  y ^{12}\left({1-y}\right)^{2} N_4(y)} {\left({1-{y}^{3}}\right)^{2}\left({1-{y}^{4}}\right)\left({1-{y}^{5}}\right)\left({1-{y}^{6}}\right)\left({1-{y}^{7}}\right)\left({1-{y}^{8}}\right)},$$
where $N_4(y)$ is a polynomial of degree 19. See \eqref{G4-Fy} and \eqref{G4Egenerationfuction} for details.

Our starting point is a simple structure
result for magic labellings, from which we are able to extract information about magic distinct labellings.

The set of magic $\R$-labellings of $G$ is
$$S_\R(G):=\{\a=(\alpha_1,\dots, \alpha_n)\in \R^{n}: \eqref{e-magic-sum} \text{ holds for } \mu(a_k)=\alpha_k, \ 1\le k \le n, \ s=wt(v_1) \}.$$
Clearly, $S_\R(G)$ is a subspace of $\R^{n}$
and its basis can be easily computed by linear algebra. Even the structure of $S_\Z(G)=S_\R(G)\cap \Z^{n}$ is easy: it is a finitely generated abelian group, and there are known algorithms for finding the generators. But the set of magic labellings $S(G)=S_{\R}(G)\cap \N^{n}$
only forms a (commutative) monoid (semi-group with identity), and it is usually not the case
that the monoid is free; that is, there exists $\a_1,\dots, \a_d \in S(G)$, called generators, such that every $\a\in S(G)$ can be written
uniquely as $\sum_{i=1}^d k_i \a_i$ where $k_i\in \N$.

We will decompose $S(G)$ into some shifted free monoids, whose elements can be uniquely written as
$\gamma + \sum_{i=1}^d k_i \a_i$, where $\gamma$ is fixed and usually in $S(G)$,
and $\a_1,\dots, \a_d$ are still called the generators. We illustrate the idea by three examples.
We give combinatorial proofs.

In terms of generating functions, we define
$$F^G(\x)=\sum_{\a\in S_\N(G)} \x^\a,$$
where $\x^\a$ is short for $x_1^{\alpha_1}\cdots x_n^{\alpha_n}$.
It is known that $F^G(\x)$ is a rational function with denominator $\prod_{\beta} (1-x^\beta)$
where $\beta$ ranges over all extreme rays of $S(G)$. See \cite[Theorem 4.6.11]{Stanley-E-Combinatiorics1}.
There are existing algorithms for computing $F^G(\x)$, such as
the Mathematica package \texttt{Omega} in \cite{Andrews-Omega}, the Maple packages \texttt{Ell} in \cite{xin-fast}
and \texttt{CTEuclid} in \cite{xin-Euclid}. But the representation of $F^G(\x)$ by computer is usually not ideal.

Our decomposition give rise a rational function decomposition:
$F^G(\x)=F_1(\x)+F_2(x)+\cdots$, where each
$F_j(\x)$ (corresponding to a shifted free monoid) is of the form
$$\frac{\x^\gamma}{(1-\x^{\a_1})(1-\x^{\a_2})\cdots (1-\x^{\a_d})}.$$

The paper is organized as follows. Section 1 is this introduction.
In Section \ref{sec:MainResult} we deal with three graphs $G_1$, $G_2$, and $G_3$. We give detailed construction of their magic labellings,
and combinatorial proofs.
In Section \ref{sec:BasicIdeas} we introduce basic idea of MacMahon's partition analysis, outline the result for $G_4$,
and setup basic tools for attacking magic distinct labellings of graphs. In particular, we compute
the generating functions for several graphs.

\section{Three examples}\label{sec:MainResult}
In this section, we illustrate our decomposition by considering the magic labellings of three graphs
depicted in Figures \ref{fig:RegularGraph4-6},\ref{fig:RegularGraph6-9} and \ref{fig:RegularGraph8-12}.
These graphs are all regular, i.e., the degree for each vertex is the same.

In what follows we will often use $e_i$ to denote the $i$-th unit vector in $\R^n$. Then $\a=(\alpha_1,\dots, \alpha_n)\in \R^n$
will be written as $\a=\sum_{i=1}^{n} \alpha_i e_i$. The dimension $n$ will be clear from the context.

For $\a\in S(G)$, the magic sum $s=s(\a)$ is determined by $\a$, so it can be treated as a
redundant variable. It is convenient for us to use the generating function
$$F^G(\x,y)=\sum_{\a\in S_\N(G)} \x^\a y^{s(\a)}.$$
Then $F^G(\x)=F^G(\x,1)$ and setting $x_i=1$ for all $i$ gives the enumerating generating function
$$ F^G(y)=F^G(\mathbf{1},y)= \sum_{s\ge 0} h_G(s) y^s,$$
where $h_G(s)$ counts the number of magic labellings of $G$ with magic sum $s$.

\subsection{Example 1}
Let $G_1=K_4$ be the complete graph with
$V=\{v_i,i=1,2,\cdots,4\}$ and $E=\{a_i,i=1,2,\cdots,6\}$ as shown in Figure \ref{fig:RegularGraph4-6}.
\begin{figure}[!ht]
\centering{
\includegraphics[height=1.3 in]{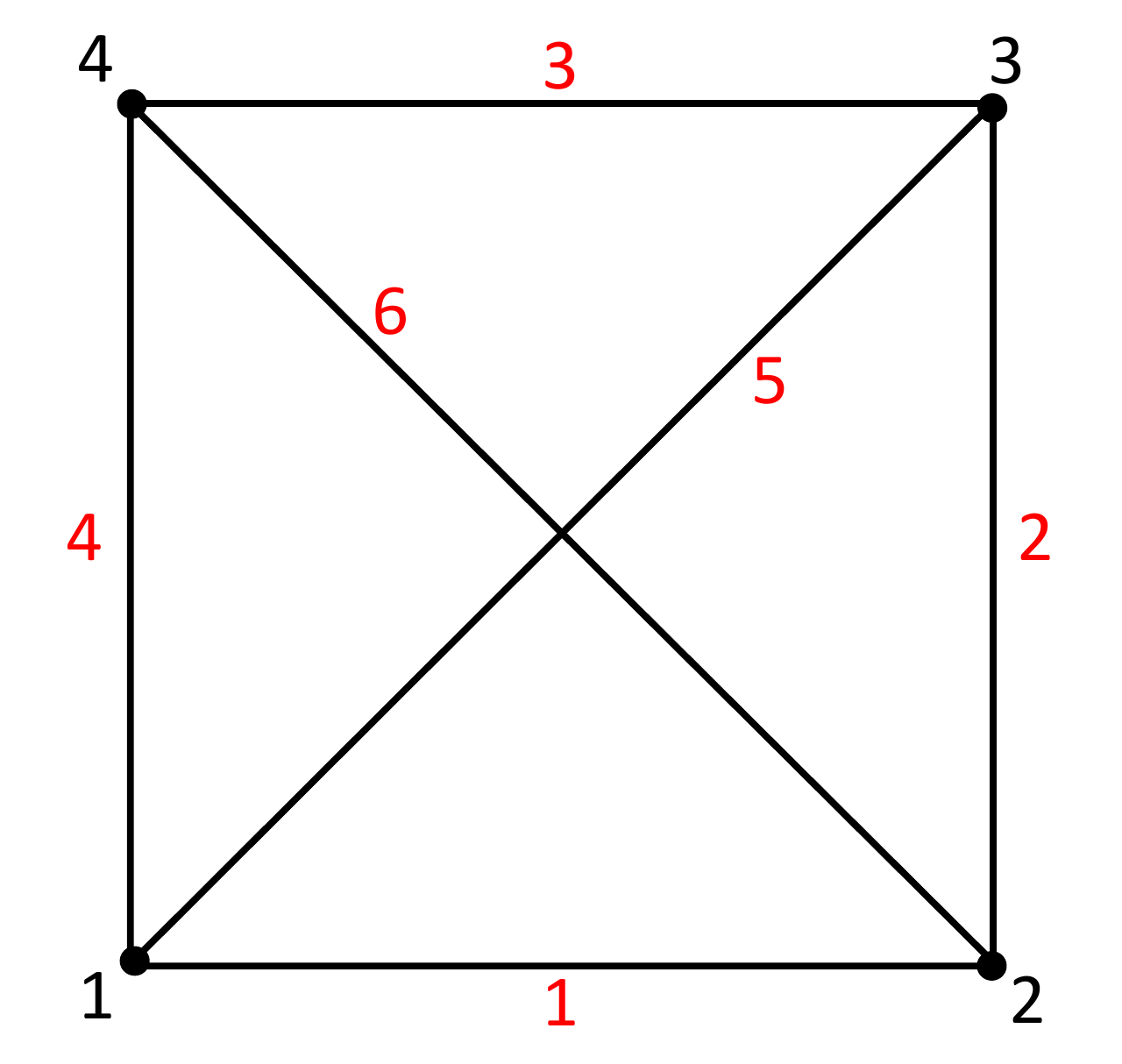}}\vspace{-0.5cm}
\caption{The complete graph $G_1=K_4$
}\label{fig:RegularGraph4-6}
\end{figure}

Suppose the magic labelling is given by $\mu(a_i)=x_i,\; i=1,2,\dots, 6$. Then the fulfilled equations \eqref{e-magic-sum} becomes
\begin{equation}\label{eqRegularGraph4-6}
               \left\{
                \begin{aligned}
                      x_1+x_4+x_5&=s,\\
                      x_1+x_2+x_6&=s,\\
                      x_2+x_3+x_5&=s,\\
                      x_3+x_4+x_6&=s.\\
                \end{aligned}\right.
\end{equation}
Then $$ \a \in S_\R(G_1) \Leftrightarrow A (\a,s)^T=0, \ A= \begin{pmatrix}
                                                                     1  &0   &0   &1   &1    &0 &-1\\
                                                                     1  &1   &0   &0   &0    &1&-1\\
                                                                     0  &1   &1   &0   &1    &0&-1\\
                                                                     0  &0   &1   &1   &0    &1&-1
                                                                     \end{pmatrix}.$$
Thus $S_\R(G_1)$ can be identified with the null space of $A$, which is a subspace in $\R^{7}$.
Since $\rank(A)=4$, $\dim S_\R(G_1)=7-4=3$.

Indeed, we have the following result.
\begin{prop}
  For $G_1$ as above, the magic labelling of $G_1$ forms a free monoid with generators
  $\a_1=e_2+e_4,\a_2=e_1+e_3,\a_3=e_5+e_6$.
\end{prop}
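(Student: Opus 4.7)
The plan is to verify that each $\a_i$ is a magic labelling and then prove that every magic labelling decomposes uniquely as a nonnegative integer combination of the $\a_i$. The key combinatorial observation is that $\a_1$, $\a_2$, $\a_3$ are precisely the indicator vectors of the three perfect matchings of $K_4$: $\{a_2,a_4\}$, $\{a_1,a_3\}$, and $\{a_5,a_6\}$. A perfect matching covers each vertex exactly once, so its indicator is automatically a magic labelling with $s=1$. One checks this directly against \eqref{eqRegularGraph4-6}.

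The central step is to show that the coordinates of any $\a=(x_1,\dots,x_6)\in S(G_1)$ satisfy the three pairing identities $x_1=x_3$, $x_2=x_4$, and $x_5=x_6$. I would extract these by taking linear combinations of the four equations in \eqref{eqRegularGraph4-6}: subtracting equation~2 from equation~1 gives $x_4+x_5=x_2+x_6$; subtracting equation~4 from equation~3 gives $x_2+x_5=x_4+x_6$; adding these yields $x_5=x_6$ and subtracting yields $x_2=x_4$, after which $x_1=x_3$ follows from equation~1 minus equation~3. Once the pairings are established, setting $c_1=x_2$, $c_2=x_1$, $c_3=x_5$ expresses $\a=c_1\a_1+c_2\a_2+c_3\a_3$, with each $c_i\in\N$ since each coordinate of $\a$ is a nonnegative integer.

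Uniqueness of the decomposition is automatic: the map $(c_1,c_2,c_3)\mapsto c_1\a_1+c_2\a_2+c_3\a_3$ reads off $c_1$, $c_2$, $c_3$ from the coordinates $x_2$, $x_1$, $x_5$ respectively, so it is injective. Combined with the linear-algebra fact that $\dim S_{\R}(G_1)=3$ (already noted from $\rank(A)=4$ in $\R^7$), the three linearly independent vectors $\a_1,\a_2,\a_3$ form an $\R$-basis of $S_{\R}(G_1)$, which also confirms that no spurious generator is missing.

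The only mildly delicate point is making the pairing identities feel combinatorial rather than purely algebraic; one can phrase them by noting that the two edges in each perfect matching are forced to carry equal label because swapping them preserves all vertex-incidence sums, but for the formal argument the short linear combination above is cleanest. I do not expect any real obstacle here: everything reduces to the four defining equations and the fact that a $3$-regular labelling on $K_4$ is rigidly determined by its perfect-matching decomposition.
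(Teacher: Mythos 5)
Your proposal is correct and follows essentially the same route as the paper: both arguments reduce to the observation that every element of $S_\R(G_1)$ has $x_1=x_3$, $x_2=x_4$, $x_5=x_6$, so the coefficients of $\a_1,\a_2,\a_3$ appear directly as coordinates, giving existence, nonnegativity/integrality, and uniqueness at once. The only cosmetic difference is that you obtain the pairing identities by explicit elimination from \eqref{eqRegularGraph4-6}, whereas the paper invokes linear independence together with $\dim S_\R(G_1)=3$ to conclude that the $\a_i$ form a basis.
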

\begin{proof}
It is straightforward to check that they are linearly independent and hence form a basis of $S_\R(G_1)$.

Let $\a=k_1\a_1+k_2\a_2+k_3\a_3=k_2e_1+k_1e_2+k_2e_3+k_1e_4+k_3e_5+k_3e_6\in S_\R(G_1) $. Then
$\a\in S(G_1)$ if and only if $k_1,k_2,k_3\in\mathbb{N}$. Therefore
$\a_1, \a_2$ and $\a_3$ freely generate $S(G_1)$.
\end{proof}
Note that $\a_1,\a_2,\a_3$ correspond to perfect matchings of $G_1$.

\begin{cor}
Let $G_1$ be as above. Then
$$F^{G_1}(\x,y)=\frac{1} {\left({1-yx_{{2}}x_{{4}}}\right)\left({1-yx_{{1}}x_{{3}}}\right)\left({1-yx_{{5}}x_{{6}}}\right)}.$$
Consequently,
\begin{align*}
  F^{G_1}(y) &=\frac{1} {\left({1-y}\right)^{3}}=1+3\,y+6\,{y}^{2}+10\,{y}^{3}+15\,{y}^{4}+21\,{y}^{5}+28\,{y}^{6}+36\,
{y}^{7}+\cdots.
\end{align*}
\end{cor}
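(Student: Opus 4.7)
The plan is to bootstrap directly from the preceding proposition, which identifies $S(G_1)$ with a free monoid on $\a_1,\a_2,\a_3$. Because the identification is by unique $\N$-linear combination, the full multivariable generating function factors as a product of three geometric series, one per generator, once we know how each generator contributes to the exponents of $\x$ and $y$.

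First I would record the magic sums $s(\a_i)$ for $i=1,2,3$. Each generator is (by inspection) the indicator vector of a perfect matching of $K_4$: $\a_1=e_2+e_4$ picks the matching $\{a_2,a_4\}$, $\a_2=e_1+e_3$ picks $\{a_1,a_3\}$, and $\a_3=e_5+e_6$ picks $\{a_5,a_6\}$. In a perfect matching, every vertex is incident to exactly one chosen edge carrying label $1$, so $wt(v_j)=1$ for every $j$, which gives $s(\a_i)=1$ for $i=1,2,3$. Equivalently, plugging each $\a_i$ directly into the system \eqref{eqRegularGraph4-6} confirms $s=1$.

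Next I would translate the free decomposition into generating functions. For $\a=k_1\a_1+k_2\a_2+k_3\a_3$, additivity of exponents gives
$$\x^{\a}\,y^{s(\a)}=\prod_{i=1}^{3}\bigl(\x^{\a_i}\,y^{s(\a_i)}\bigr)^{k_i}=(yx_2x_4)^{k_1}(yx_1x_3)^{k_2}(yx_5x_6)^{k_3}.$$
Summing over $(k_1,k_2,k_3)\in\N^3$ and using the geometric series three times,
$$F^{G_1}(\x,y)=\sum_{k_1,k_2,k_3\ge 0}(yx_2x_4)^{k_1}(yx_1x_3)^{k_2}(yx_5x_6)^{k_3}=\frac{1}{(1-yx_2x_4)(1-yx_1x_3)(1-yx_5x_6)},$$
which is the claimed form.

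Specializing $x_1=\cdots=x_6=1$ collapses each factor to $1-y$, giving $F^{G_1}(y)=1/(1-y)^3$, and the displayed power series follows from $[y^k]\,(1-y)^{-3}=\binom{k+2}{2}$, producing $1,3,6,10,15,21,28,36,\dots$. There is no real obstacle here: all the combinatorial work has already been done in the proposition, and this corollary is essentially a repackaging; the only point requiring a moment's thought is the uniform magic sum $s(\a_i)=1$, which is exactly the perfect-matching observation noted after the proposition.
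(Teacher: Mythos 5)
Your proof is correct and follows exactly the route the paper intends: the corollary is an immediate consequence of the free-monoid proposition, with each generator a perfect matching contributing a factor $1-yx_ix_j$, and the paper leaves precisely this routine translation implicit. Nothing is missing; your note that $s(\a_i)=1$ via the perfect-matching observation is the only point worth spelling out, and you did.
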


\subsection{Example 2}
Let $G_2$ be the graph with
$V=\{v_i,i=1,2,\cdots,6\}$ and $E=\{a_i,i=1,2,\cdots,9\}$ as shown in Figure \ref{fig:RegularGraph6-9}.
\begin{figure}[!ht]
  \centering{
  \includegraphics[height=1.3 in]{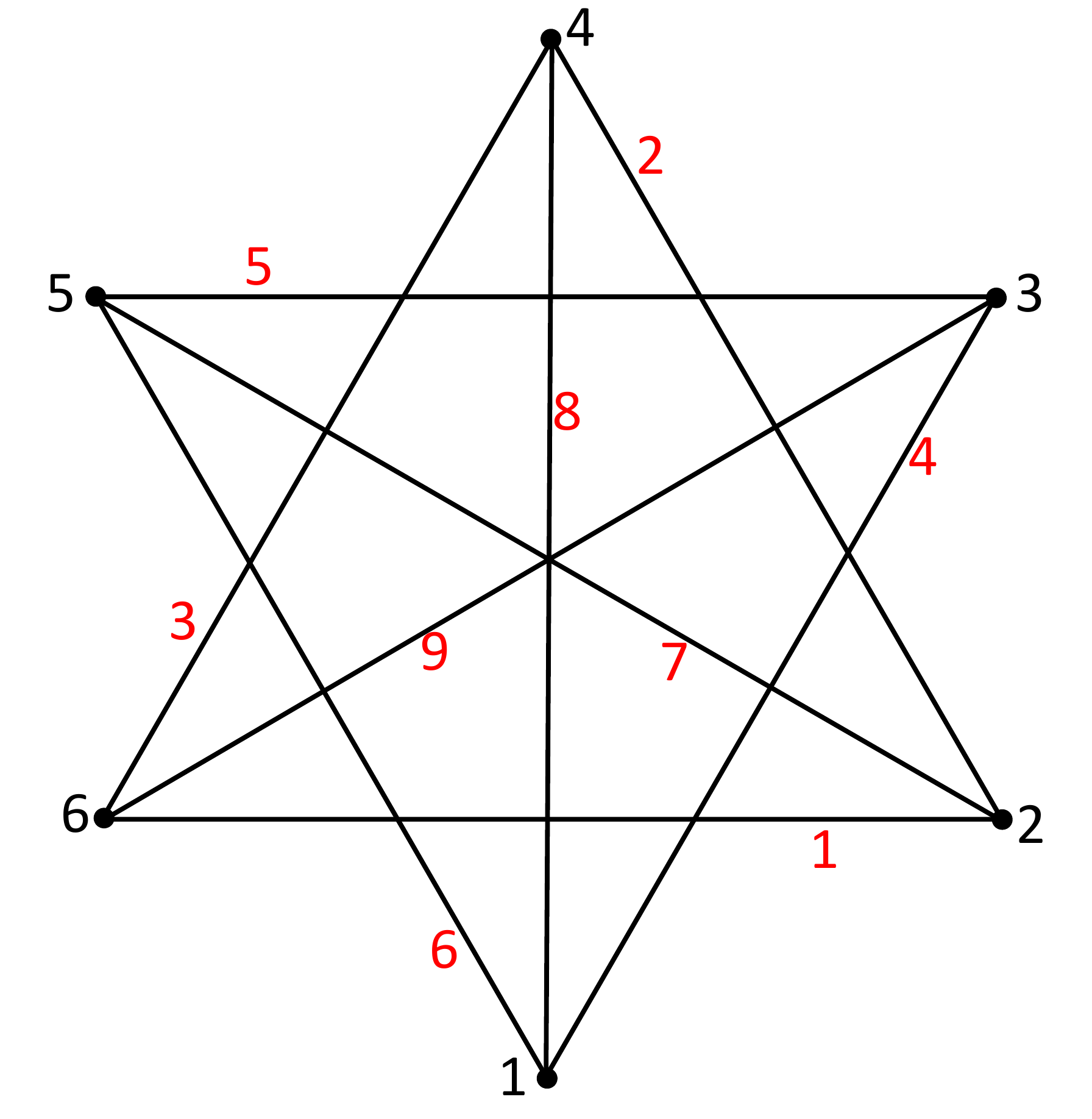}}\vspace{-0.5cm}\\
  \caption{The Graph $G_2$ with 6 vertices and 9 edges.}\label{fig:RegularGraph6-9}
\end{figure}
Suppose the magic labelling is given by $\mu(a_i)=x_i,\; i=1,2,\dots, 9$. Then the fulfilled equations \eqref{e-magic-sum} becomes
\begin{equation}\label{eqRegularGraph6-9}
              \left\{
              \begin{aligned}
                     x_4+x_6+x_8&=s,\\
                     x_1+x_2+x_7&=s,\\
                     x_4+x_5+x_9&=s,\\
                     x_2+x_3+x_8&=s,\\
                     x_5+x_6+x_7&=s,\\
                     x_1+x_3+x_9&=s.\\
              \end{aligned}\right.
\end{equation}
It is easy to see the following properties hold:

i) $\dim S_\R(G_2)=4$;

ii)
$\begin{array}{llll}\label{beq2}
\b_1=e_1+e_5+e_8, &\b_2=e_2+e_6+e_9, &\b_3=e_3+e_4+e_7, &
\b_4=e_7+e_8+e_9
\end{array}$
are linearly independent and hence form a basis of $S_\R(G_2)$, and
they correspond to the 4 perfect matchings of $G_2$;

iii) $\b_5=e_1+e_2+e_3+e_4+e_5+e_6\in S(G_2)$ is not a nonnegative linear combination of $\b_1,\dots, \b_4$.
Indeed we have $\b_5=\b_1+\b_2+\b_3-\b_4$.

\begin{prop}\label{pro:6-9}
        Let $G_2$ be as in Figure 2. Then
        every $\b$ in $S(G_2)$ can be uniquely written in one of the following two types.
        \begin{enumerate}
               \item [$B_1:$]  $l_1\b_1+l_2\b_2+l_3\b_3+l_4\b_4,\  l_i\in \N\ (1 \leqslant i \leqslant 4);$
               \item [$B_2:$]  $\b_5+ l_1\b_1+l_2\b_2+l_3\b_3+l_4\b_5,\  l_i\in \N\ (1 \leqslant i \leqslant 4).$
        \end{enumerate}
\end{prop}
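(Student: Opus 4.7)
The plan is to exploit the fact that $\b_1,\b_2,\b_3,\b_4$ form a basis of $S_\R(G_2)$: every $\b\in S_\R(G_2)$ admits a unique expansion $\b=c_1\b_1+c_2\b_2+c_3\b_3+c_4\b_4$ with $c_i\in\R$. The task is to characterize when such an expansion corresponds to an element of $S(G_2)$, and then to use the identity $\b_5=\b_1+\b_2+\b_3-\b_4$ (noted just before the proposition) to trade a negative $\b_4$-coefficient for copies of $\b_5$.

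First I would substitute the definitions of $\b_1,\dots,\b_4$ to read off the nine components of $\b$, obtaining $(x_1,\dots,x_9)=(c_1,c_2,c_3,c_3,c_1,c_2,c_3+c_4,c_1+c_4,c_2+c_4)$. Hence $\b\in S(G_2)$ is equivalent to $c_1,c_2,c_3\in\N$ together with $c_1+c_4,\,c_2+c_4,\,c_3+c_4\in\N$; in particular $c_4\in\Z$ and $c_4\ge-\min(c_1,c_2,c_3)$.

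Next I would split on the sign of $c_4$. If $c_4\ge 0$, take $l_i=c_i$ for $i=1,2,3,4$ and read off form $B_1$ immediately. If $c_4\le-1$, set $l_4=-1-c_4\in\N$ and $l_i=c_i+c_4\in\N$ for $i=1,2,3$ (nonnegative by the constraints above). Writing $c_i=l_i+1+l_4$ and $c_4=-(1+l_4)$ and then applying $\b_5=\b_1+\b_2+\b_3-\b_4$ collapses the expansion to
\[\b=(1+l_4)\b_5+l_1\b_1+l_2\b_2+l_3\b_3=\b_5+l_1\b_1+l_2\b_2+l_3\b_3+l_4\b_5,\]
which is form $B_2$.

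Uniqueness is then automatic: the $\b_4$-coefficient of the basis expansion equals $l_4\ge 0$ in form $B_1$ and $-(1+l_4)\le-1$ in form $B_2$, so the two types are disjoint, and within each type the $l_i$ are determined by $c_1,\dots,c_4$. The only genuinely delicate point is verifying that a single mandatory copy of $\b_5$ built into form $B_2$, together with the free $l_4\b_5$ summand, suffices to cover every value $c_4\le-1$ while keeping $l_1,l_2,l_3$ nonnegative; this reduces to the observation that each added $\b_5$ shifts the basis coefficients by $(+1,+1,+1,-1)$, which matches the monoid constraints on $(c_1,c_2,c_3,c_4)$ exactly and is really the reason the decomposition into the two shifted free monoids $B_1$ and $B_2$ is forced.
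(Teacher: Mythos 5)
Your proposal is correct and follows essentially the same route as the paper: expand $\b$ in the basis $\b_1,\dots,\b_4$, observe that membership in $S(G_2)$ forces $c_1,c_2,c_3,c_i+c_4\in\N$ (so $c_4\in\Z$), split on the sign of $c_4$, and use $\b_5=\b_1+\b_2+\b_3-\b_4$ to convert the $c_4<0$ case into form $B_2$. Your uniqueness argument via the sign of the $\b_4$-coefficient is just a slightly more explicit version of the paper's appeal to the linear independence of $\b_1,\b_2,\b_3,\b_5$.
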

\begin{proof}
By property ii), any element $\b \in S_\R(G_2)$ can be written as
$\b=k_1\b_1+k_2\b_2+k_3\b_3+k_4\b_4$ where $k_i\in \mathbb{R}, \ 1 \leqslant i \leqslant 4$.

 Then $\b=k_1e_1+k_2e_2+k_3e_3+k_3e_4+k_1e_5+k_2e_6+(k_3+k_4)e_7+(k_1+k_4)e_8+(k_2+k_4)e_9$.
 It belongs to $S(G_2)$ if and only if
$k_1,k_2,k_3,k_1+k_4,k_2+k_4,k_3+k_4\in\N $. Note that we can only deduce that $k_4\in \mathbb{Z}$.

When $k_4\in\mathbb{N}$, such $\b$ naturally corresponds to the type $B_1$ case (by setting $l_i=k_i\in \mathbb{N}\ (1 \leqslant i \leqslant 4)$).

When $k_4<0$, i.e., $-k_4 \in \mathbb{P}$ we need to rewrite
  \begin{align*}
         \b&=k_1\b_1+k_2\b_2+k_3\b_3+k_4(\b_1+\b_2+\b_3-\b_5)\\
               &=\b_5+(k_1+k_4)\b_1+(k_2+k_4)\b_2+(k_3+k_4)\b_3+(-k_4-1)\b_5.
\end{align*}
By comparing with the type $B_2$ case, we shall
have $l_i=k_i+k_4$ for $i=1,2,3$ and $l_4=-k_4-1$.
The conditions on the $k_i$'s transform exactly to $l_i\in \mathbb{\N}, \; 1\le i\le 4$.
Finally, the uniqueness follows by the linear independency of $\b_1,\b_2,\b_3,\b_5$.
\end{proof}

\begin{cor}
Let $G_2$ be as above. Then

$\begin{aligned}
            F^{G_2}(\x,y)=&\frac{1}{(1-yx_1x_5x_8)(1-yx_2x_6x_9)(1-yx_3x_4x_7)(1-yx_7x_8x_9)}\\
                         &+\frac{y^2x_1x_2x_3x_4x_5x_6}{(1-yx_1x_5x_8)(1-yx_2x_6x_9)(1-yx_3x_4x_7)(1-y^2x_1x_2x_3x_4x_5x_6)}
\end{aligned}.$ 

Consequently,
\begin{align*}
  F^{G_2}(y) &=\frac{1+y+{y}^{2}} {\left({1-y}\right)^{3}\left({1-{y}^{2}}\right)}=1+4\,y+11\,{y}^{2}+23\,{y}^{3}+42\,{y}^{4}+69\,{y}^{5}+106\,{y}^{6}+\cdots.
\end{align*}
\end{cor}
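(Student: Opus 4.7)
The plan is to apply Proposition \ref{pro:6-9} directly: the partition of $S(G_2)$ into types $B_1$ and $B_2$ gives, by uniqueness of representation, a corresponding decomposition of the generating function $F^{G_2}(\x,y)$ as a sum of two rational functions, one per type. The corollary then reduces to identifying the correct weighted monomial for each generator and summing the resulting geometric series.

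First I would compute the weighted monomial $\x^{\b} y^{s(\b)}$ for each $\b$ appearing in Proposition \ref{pro:6-9}. The vectors $\b_1,\b_2,\b_3,\b_4$ are perfect matchings of $G_2$, so every vertex is incident to exactly one selected edge and $s(\b_i)=1$ for $i=1,2,3,4$; for example $\b_1=e_1+e_5+e_8$ contributes $yx_1x_5x_8$. For $\b_5=e_1+e_2+e_3+e_4+e_5+e_6$, reading the incidences off \eqref{eqRegularGraph6-9} shows the six selected edges form two vertex-disjoint triangles on $\{v_1,v_3,v_5\}$ and $\{v_2,v_4,v_6\}$, so every vertex has degree $2$ and $s(\b_5)=2$, giving the monomial $y^2 x_1x_2x_3x_4x_5x_6$.

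Next, summing over type $B_1$ elements factors as a product of four independent geometric series in $\b_1,\b_2,\b_3,\b_4$:
$$\sum_{l_1,\ldots,l_4\geq 0}\prod_{i=1}^{4}\bigl(y^{s(\b_i)}\x^{\b_i}\bigr)^{l_i} = \frac{1}{\prod_{i=1}^{4}\bigl(1-y^{s(\b_i)}\x^{\b_i}\bigr)},$$
which is exactly the first summand. For type $B_2$, the prefactor $\x^{\b_5} y^{s(\b_5)} = y^2x_1x_2x_3x_4x_5x_6$ is pulled out, and the remaining sum over $l_1,l_2,l_3,l_4$ with generators $\b_1,\b_2,\b_3,\b_5$ produces the second summand. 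Adding gives the claimed formula for $F^{G_2}(\x,y)$.

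For the scalar consequence, I would set $x_i=1$ for all $i$: the two summands collapse to $1/(1-y)^4$ and $y^2/\bigl((1-y)^3(1-y^2)\bigr)$, and combining over the common denominator $(1-y)^3(1-y^2)$ via $1/(1-y)^4=(1+y)/\bigl((1-y)^3(1-y^2)\bigr)$ yields $(1+y+y^2)/\bigl((1-y)^3(1-y^2)\bigr)$, matching the series expansion. The proof has no substantive obstacle once Proposition \ref{pro:6-9} is in hand; the only point requiring care is the magic sum of $\b_5$, since a miscount there would propagate into both the exponent of $y$ in the prefactor and the denominator factor $(1-y^2x_1x_2x_3x_4x_5x_6)$.
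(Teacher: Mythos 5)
Your proposal is correct and follows exactly the route the paper intends: Corollary stated after Proposition \ref{pro:6-9} is the direct generating-function translation of the $B_1$/$B_2$ decomposition, with the type $B_1$ sum giving the first term and the shifted free monoid of type $B_2$ (prefactor $\x^{\b_5}y^2$, generators $\b_1,\b_2,\b_3,\b_5$) giving the second, and your computation $s(\b_5)=2$ from the two disjoint triangles is the one point needing care and is handled correctly. The specialization $x_i=1$ and the combination over the common denominator $(1-y)^3(1-y^2)$ also match the stated formula, so nothing is missing.
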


\subsection{Example 3}
Let $G_3$ be the graph with
$V=\{v_i,i=1,2,\cdots,8\}$ and $E=\{a_i,i=1,2,\cdots,12\}$ as shown in Figure \ref{fig:RegularGraph8-12}.
\begin{figure}[h]
  \centering
  \includegraphics[width=4cm]{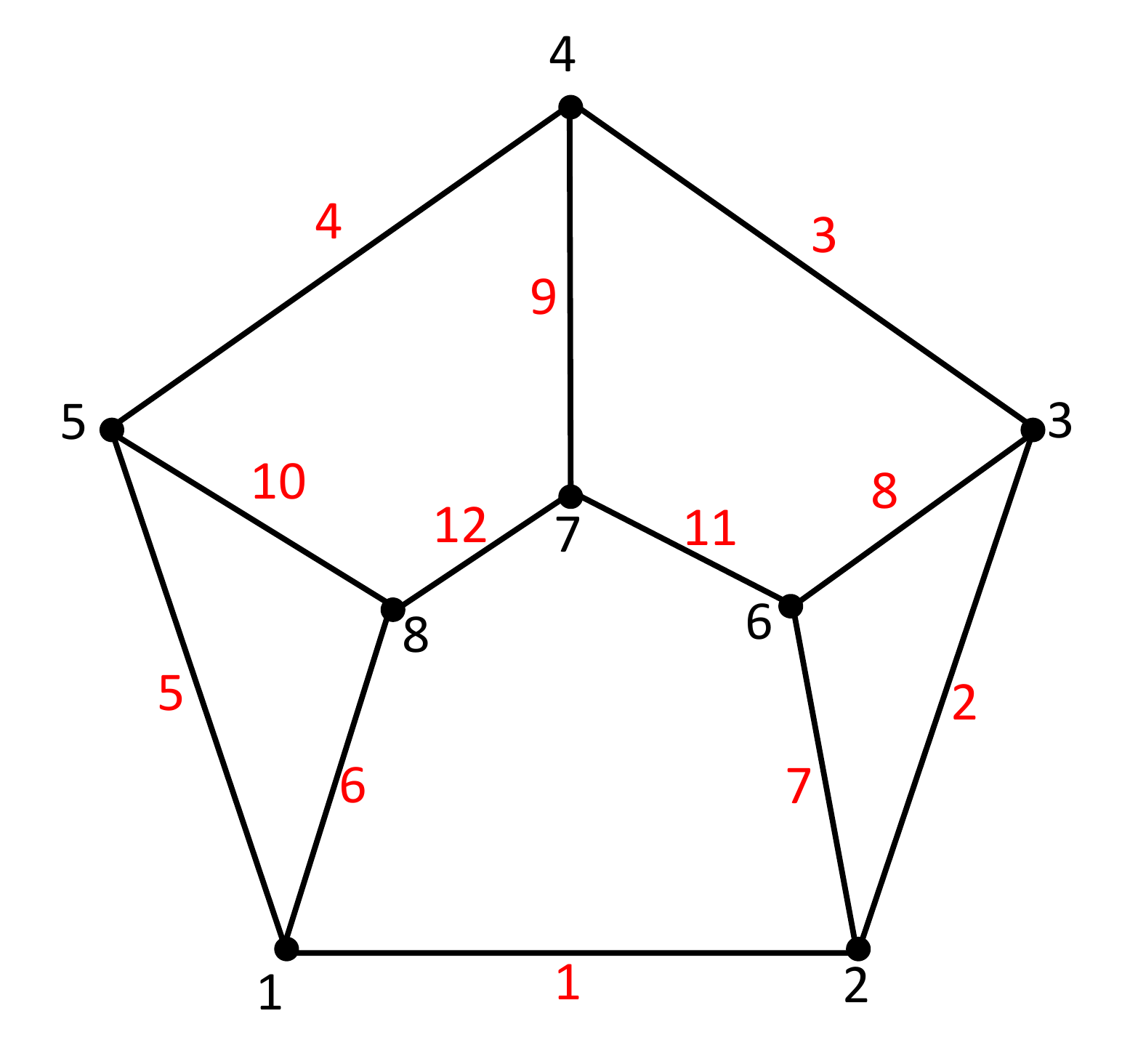}\vspace{-0.5cm}\\
  \caption{The Graph $G_3$ with 8 vertices and 12 edges.}\label{fig:RegularGraph8-12}
\end{figure}
Suppose the magic labelling is given by $\mu(a_i)=x_i,\; i=1,2,\dots, 12$. Then the fulfilled equations \eqref{e-magic-sum} become
\begin{equation}\label{eqRegularGraph8-12}
              \left\{
              \begin{aligned}
              x_4+x_5+x_6&=s,\\
              x_1+x_2+x_7&=s,\\
              x_2+x_3+x_8&=s,\\
              x_3+x_4+x_9&=s,\\
              x_4+x_5+x_{10}&=s,\\
              x_7+x_8+x_{11}&=s,\\
              x_9+x_{11}+x_{12}&=s,\\
              x_6+x_{10}+x_{12}&=s.\\
              \end{aligned}\right.
\end{equation}

It is easy to see that $\dim S_\R(G_3)=5$.
The structure of $S(G_3)$ is indeed pretty complicated. Our combinatorial proof is guided by but independent of the algebraic decomposition described in
Section \ref{sec:BasicIdeas}.

We need the following vectors
\begin{footnotesize}
\begin{align}\label{ceq3}
  \begin{array}{ll}
    \c_1=e_1+e_3+e_{10}+e_{11}, & \c_2=e_2+e_4+e_6+e_{11},\\
    \c_3=e_3+e_5+e_7+e_{12},    & \c_4=e_2+e_5+e_6+e_7+e_8+2e_9+e_{10},\\
    \c_5=e_2+e_3+e_5+e_6+e_7+e_9+e_{10}+e_{11}, &\c_6=e_1+e_4+e_8+e_{12},  \\
    \c_7=e_2+e_4+e_5+e_6+e_7+e_8+e_9+e_{12},&\c_8=e_1+e_8+e_9+e_{10}.
  \end{array}
\end{align}
\end{footnotesize}
The $\c_i$'s are the extreme rays which are computed by other methods.
The following relations can be easily checked.
\begin{prop}
The $\c_i(1\leqslant i \leqslant 8)$ have the following relationship.
\begin{equation}\label{tab:gammarelation}
\begin{tabular}{l|llrr}
\toprule
$(b)$        &$\c_5=\c_1+\c_4-\c_8 $  \\
\midrule
$(c_1)$       &$\c_4=-\c_1+\c_2+\c_3-\c_6+2\c_8$ \\
           &$\c_5=\c_2+\c_3-\c_6+\c_8 $    \\
\midrule
$(c_2)$       &$\c_1=\c_2+\c_3-\c_6+2\c_8 $   \\
           &$\c_5=\c_2+\c_3-\c_6+\c_8 $    \\
\midrule
$(c_3)$       &$\c_1=\c_2+\c_3+\c_4+\c_6-2\c_7$   \\
           &$\c_5=\c_2+\c_3+\c_4-\c_7$        \\
\bottomrule
\end{tabular}
\end{equation}
\end{prop}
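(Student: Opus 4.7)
The plan is to reduce the verification of every displayed identity in \eqref{tab:gammarelation} to a routine coordinate-by-coordinate check based on the explicit formulas for $\c_1,\dots,\c_8$ supplied in \eqref{ceq3}. A useful preliminary observation is that $\dim S_\R(G_3)=5$ while all eight $\c_i$ lie in $S_\R(G_3)\subset\R^{12}$, so at least three independent linear dependencies among them are forced to exist; the table is simply recording four especially convenient such dependencies in normal form.

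Concretely, the first step is to verify the single identity $(b)$, $\c_5=\c_1+\c_4-\c_8$, by expanding both sides in the standard basis $e_1,\dots,e_{12}$. Once $(b)$ is in hand, each of the rows $(c_1),(c_2),(c_3)$ states two identities, and the second one in each row---always an expression for $\c_5$---is an immediate algebraic consequence of $(b)$ together with the first: substituting the ``first'' relation of row $(c_i)$ into $(b)$ produces exactly the ``second'' relation of that row. So only four vector identities really need an independent check, namely $(b)$ and the three leading relations of $(c_1),(c_2),(c_3)$.

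For each of these four identities the verification is entirely mechanical: form the indicated integer linear combination of the $\c_i$'s using \eqref{ceq3} and read off the coefficient of each $e_j$. Organizing this as a coefficient table whose rows are indexed by $e_1,\dots,e_{12}$ and whose columns list the contribution of each $\c_i$ entering the identity reduces everything to twelve one-line arithmetic checks per identity. The main ``obstacle'' is not conceptual but purely bookkeeping---the coefficients $\pm 2$ in front of $\c_7$ and $\c_8$, together with the coefficient $2$ on $e_9$ inside $\c_4$, offer small opportunities for sign errors---so the calculation should be laid out in tabular form (or handed to a computer algebra system) to eliminate that risk.
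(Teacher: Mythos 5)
Your method---expand everything in the $e_j$ basis using \eqref{ceq3} and check coordinates---is exactly what the paper intends (it offers nothing beyond ``easily checked''), but two concrete points in your reduction fail once the arithmetic is actually carried out. First, your shortcut ``second relation of each row $=$ $(b)$ plus the first relation'' breaks down for row $(c_3)$: substituting $\c_1=\c_2+\c_3+\c_4+\c_6-2\c_7$ into $(b)$ gives $\c_5=\c_2+\c_3+2\c_4+\c_6-2\c_7-\c_8$, which coincides with the stated $\c_5=\c_2+\c_3+\c_4-\c_7$ only via the additional identity $\c_4+\c_6=\c_7+\c_8$, which your plan never verifies. So either check that identity as well or verify the second relation of $(c_3)$ directly; the count of ``only four independent checks'' is not quite right.

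Second, and more importantly, the deferred coordinate check would reveal that the first relation of row $(c_2)$ is false as printed: from \eqref{ceq3} one gets $\c_2+\c_3-\c_6+2\c_8=e_1+e_2+e_3+e_5+e_6+e_7+e_8+2e_9+2e_{10}+e_{11}\neq \c_1$. The relation actually needed (and the one implicitly used in the rewriting table for $P_{c_2}(S(G_3))$ in the proof of Theorem \ref{theo:8-12}) is $\c_1=\c_2+\c_3-\c_4-\c_6+2\c_8$. Your own consistency test would have caught this had you run it: substituting the printed $(c_2)$ relation into $(b)$ yields $\c_5=\c_2+\c_3+\c_4-\c_6+\c_8$, which disagrees with the table's $\c_5=\c_2+\c_3-\c_6+\c_8$ by $\c_4$, whereas the corrected relation is consistent. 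All the remaining identities ($(b)$, both relations of $(c_1)$, both of $(c_3)$, and the shared $\c_5=\c_2+\c_3-\c_6+\c_8$) do check out. In short, the approach is the right one, but the proposal asserts without computation that every printed identity verifies---one does not---and one of its derivation steps silently relies on an unverified relation.
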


In order to the Theorem \ref{theo:8-12}, we give the following Lemma.
\begin{lem}\label{lem:8-12}
Any  $\c\in S_\R(G_3)$ can be uniquely written as $\c=k_1\c_1+k_2\c_2+k_3\c_3+k_4\c_4+k_5\c_5$
for some $k_i\in \mathbb{R}\ (1 \leqslant i \leqslant 5)$. It belongs to $S(G_3)$
  if and only if the following properties hold true.
\begin{equation}\label{tab:kproperty}
\begin{tabular}{llll|rrrr}
\toprule
$k_1 $,                  &$k_2  $,               &$k_3 $,                 &$k_4  $,         &  \\
$k_1+k_2+k_5  $,         &$k_1+k_3+k_5 $,        &$k_1+k_4+k_5 $,         &$k_2+k_4+k_5 $,  & $\in \mathbb{N}$\\
$k_3+k_4+k_5$,           &$2k_4+k_5 $.           &                        &                 &\\
\bottomrule
\end{tabular}
\end{equation}
Consequently, $k_5 \in \mathbb{Z}$.
\end{lem}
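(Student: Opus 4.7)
The plan is to first establish the uniqueness of the expansion by checking linear independence of $\c_1,\dots,\c_5$ over $\R$, and then to translate the coordinate-wise nonnegativity of $\c \in S(G_3)$ into the stated list of conditions.

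Since $\dim S_\R(G_3)=5$, any five linearly independent vectors in $S_\R(G_3)$ automatically form a basis. To see that $\c_1,\ldots,\c_5$ are linearly independent, I would restrict to the five coordinates $e_1, e_4, e_{12}, e_8, e_9$; reading off from \eqref{ceq3}, the corresponding $5\times 5$ submatrix is
$$\begin{pmatrix}1&0&0&0&0\\ 0&1&0&0&0\\ 0&0&1&0&0\\ 0&0&0&1&0\\ 0&0&0&2&1\end{pmatrix},$$
which is lower triangular with nonzero diagonal. Hence every $\c \in S_\R(G_3)$ admits a unique real expansion $\c=\sum_{i=1}^5 k_i\c_i$.

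For the iff statement I would expand $\c=\sum k_i\c_i$ in the standard basis using \eqref{ceq3}. A direct computation gives the twelve coordinates, in the order $e_1,e_2,\dots,e_{12}$,
\begin{gather*}
k_1,\ k_2+k_4+k_5,\ k_1+k_3+k_5,\ k_2,\ k_3+k_4+k_5,\ k_2+k_4+k_5,\\
k_3+k_4+k_5,\ k_4,\ 2k_4+k_5,\ k_1+k_4+k_5,\ k_1+k_2+k_5,\ k_3.
\end{gather*}
The coordinate on $e_6$ coincides with that on $e_2$, and the coordinate on $e_7$ coincides with that on $e_5$; discarding these two duplicates, the requirement $\c\in S(G_3)=S_\R(G_3)\cap\N^{12}$ collapses to precisely the ten membership conditions listed in \eqref{tab:kproperty}.

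For the final assertion, $k_4\in\N$ and $2k_4+k_5\in\N$ together force $k_5=(2k_4+k_5)-2k_4\in\Z$. The argument is essentially bookkeeping; the only step that requires care is the coordinate expansion itself, and in particular spotting the two coincidences ($e_2\equiv e_6$ and $e_5\equiv e_7$), since overlooking them would either add redundant conditions or, more seriously, cause one to miss the fact that the ten stated inequalities are already exhaustive.
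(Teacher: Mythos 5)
Your proposal is correct and follows essentially the same route as the paper: expand $\c=\sum_{i=1}^5 k_i\c_i$ in the $e_i$ basis (your twelve coordinates agree exactly with the paper's \eqref{e-gamma}, including the coincidences $e_2\equiv e_6$ and $e_5\equiv e_7$), read off the ten nonnegativity conditions, and deduce $k_5=(2k_4+k_5)-2k_4\in\Z$. The only difference is cosmetic: you substantiate the linear independence of $\c_1,\dots,\c_5$ via the triangular minor on coordinates $e_1,e_4,e_{12},e_8,e_9$, a detail the paper simply asserts.
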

\begin{proof}
The first part follows by the linear independency of $\c_1,\dots, \c_5$. By writing in the $e_i$ basis, we have
  \begin{align}\label{e-gamma}
  \c=&k_1e_1+(k_2+k_4+k_5)e_2+(k_1+k_3+k_5)e_3+k_2e_4 \nonumber\\
           &\ +(k_3+k_4+k_5)e_5+(k_2+k_4+k_5)e_6 \\
           &\ \ +(k_3+k_4+k_5)e_7+k_4e_8+(2k_4+k_5)e_9 \nonumber\\
           &\ \ \ +(k_1+k_4+k_5)e_{10}+(k_1+k_2+k_5)e_{11}+k_3e_{12}.\nonumber
  \end{align}
Now $\c\in S(G_3)$ if and only if each coordinate belongs to $\N$. This is the second part.
The consequence $k_5 \in \mathbb{Z}$ is obvious.
\end{proof}

From now on, we will identify $\c\in S_\R(G_3)$ with $(k_1,\dots, k_5)\in \R^5$.
For a set $S$ of $(k_1,\dots, k_5)\in \R^5$ and a property $P$, we will use $P(S)$ to denote the subset of $S$ satisfying $P$.
We will use the following properties.
\begin{description}
  \item[$P$]    \ \ \ \ \ \  $k_i \in \N(1 \leqslant i \leqslant 4), k_5 \in \Z;$
  \item[$P_a$]    \ \ \ \ \  $k_5 \in \N,k_i \in \N (1\leqslant i \leqslant 4);$
  \item[$P_b$]           \ \ $-k_5\in\mathbb{P}, \ \min\{k_1,k_4\}\geqslant -k_5, k_i \in \N (1\leqslant i \leqslant 4);$
  \item[$P_{c_1}$]           $-k_5\in\mathbb{P}, \ \min\{k_1,k_4\} <   -k_5,   \  k_1\geqslant k_4,k_i \in \N (1\leqslant i \leqslant 4);$
  \item[$P_{c_2}$]           $-k_5\in\mathbb{P}, \ \min\{k_1,k_4\} <   -k_5,   \  k_1 <   k_4,    \   2k_1+k_5\geqslant 0,k_i \in \N (1\leqslant i \leqslant 4);$
  \item[$P_{c_3}$]           $-k_5\in\mathbb{P}, \ \min\{k_1,k_4\} <   -k_5,   \  k_1 <   k_4,    \   2k_1+k_5<0,k_i \in \N (1\leqslant i \leqslant 4).$
\end{description}
It is easy to see that
$P(\R^5)=P_a(\R^5)\uplus P_b(\R^5)  \uplus P_{c_1}(\R^5)\uplus P_{c_2}(\R^5) \uplus P_{c_3}(\R^5),$ where
$\uplus$ denotes disjoint union.
This implies that
$$P(S(G_3))=P_a(S(G_3))\uplus P_b(S(G_3))  \uplus P_{c_1}(S(G_3))\uplus P_{c_2}(S(G_3)) \uplus P_{c_3}(S(G_3)). $$

\begin{lem}\label{lem:8-12-2}
Let $h$ be any of $a,b,c_1,c_2$ and $c_3$. Then $P_h(S(G_3))=\{(l_1,\dots, l_5)\in \N^5: l_i \textrm{ are given in Table }\ref{tab:lvalue1}\} $.
\begin{equation}\label{tab:lvalue1}
\begin{tabular}{l|l|l|l|l|l}
\toprule
Types                     &$l_1$            &$l_2$                &$l_3$             &$l_4$           &$l_5$         \\
\hline
$P_a(S(G_3))$             &$k_1$            &$k_2$                & $k_3$            &$k_4$           &$k_5$       \\
\hline
$P_b(S(G_3))$             &$k_1+k_5$        &$k_2$                &$k_3$             &$k_4+k_5$       &$-k_5-1$  \\
\hline
$P_{c_1}(S(G_3))$         &$k_1-k_4$        &$k_2+k_4+k_5$        &$k_3+k_4+k_5$     &$-k_5-k_4-1$    &$2k_4+k_5$   \\
\hline
$P_{c_2}(S(G_3))$         &$k_2+k_1+k_5$    &$k_3+k_1+k_5$        &$k_4-k_1-1$       &$-k_5-k_1-1$    &$2k_1+k_5$  \\
\hline
$P_{c_3}(S(G_3))$         &$k_2+k_1+k_5$    &$k_3+k_1+k_5$        &$k_4+k_1+k_5$     &$k_1$           &$-k_5-2k_1-1$  \\
\bottomrule
\end{tabular}
\end{equation}
\end{lem}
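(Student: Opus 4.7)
The plan is to verify, for each $h \in \{a, b, c_1, c_2, c_3\}$, that the affine map $\phi_h \colon (k_1,\dots,k_5) \mapsto (l_1,\dots,l_5)$ given by the corresponding row of Table \ref{tab:lvalue1} is a bijection from $P_h(S(G_3))$ onto $\N^5$. The map is integer-linear with an easily invertible coefficient matrix, so uniqueness of the $(l_i)$-representation within each case is automatic; only the two set-theoretic inclusions require work.

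The driving idea in each case is to use the corresponding relation in \eqref{tab:gammarelation} to rewrite $\c = k_1\c_1+\cdots+k_5\c_5$ as a shift plus a nonnegative integer combination of a different quintuple of the $\c_i$'s. For $P_a$ nothing needs doing: set $l_i = k_i$. For $P_b$, the relation $\c_5 = \c_1 + \c_4 - \c_8$ gives
$$\c = \c_8 + (k_1+k_5)\c_1 + k_2\c_2 + k_3\c_3 + (k_4+k_5)\c_4 + (-k_5-1)\c_8,$$
and the five coefficients on the right are exactly the prescribed $l_i$'s. The cases $P_{c_1}, P_{c_2}, P_{c_3}$ are analogous: apply the two substitutions in group $(c_1)$ (respectively $(c_2)$, $(c_3)$) to eliminate the offending negative coefficient, then split off one copy of $\c_6$ (for $c_1, c_2$) or $\c_7$ (for $c_3$) as the shift.

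The verification in each case then splits into two directions. Forward: given $(k_1,\dots,k_5) \in P_h(S(G_3))$, one reads off the table and checks each $l_i \ge 0$ using the defining inequalities of $P_h$ (for example $k_1,k_4 \ge -k_5$ in case $P_b$, or the refined comparisons among $k_1,k_4,-k_5$ in the $c_j$ cases). Backward: given $(l_1,\dots,l_5)\in\N^5$, invert the linear map to recover $(k_1,\dots,k_5)$ and verify both the inequalities defining $P_h$ and all ten nonnegativity constraints of Table \ref{tab:kproperty}.

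The main obstacle will be this backward direction, since each of the ten expressions $k_1,\dots,2k_4+k_5$ must be re-expressed as a nonnegative integer combination of the $l_i$. The relations in \eqref{tab:gammarelation} are designed so that the negative coefficient that would force $(k_1,\dots,k_5)$ out of $P_a$ is precisely absorbed into the shift, which is what makes all ten identities come out nonnegative. Apart from this conceptual point the work is routine linear arithmetic, handled case by case.
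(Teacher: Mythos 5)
Your proposal is correct and follows essentially the same route as the paper: for each case $h$ one checks both inclusions by reading the $l_i$ off the table (forward, using the defining inequalities of $P_h$ together with Table \eqref{tab:kproperty}) and by inverting the unimodular affine change of variables (backward), exactly as the paper does for its representative case $h=c_1$. If anything, your outline of the backward direction is slightly more explicit than the paper's, since you note that one must also re-verify the ten nonnegativity constraints of Table \eqref{tab:kproperty} (i.e.\ membership in $S(G_3)$), a point the paper's write-up passes over; these checks do indeed come out as nonnegative combinations of the $l_i$ in every case.
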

\begin{proof}
We only prove the case of $h=c_1$, namely, $P_{c_1}(S(G_3))=\{(l_1,\dots, l_5)\in \N^5:l_1=k_1-k_4, l_2=k_2+k_4+k_5, l_3=k_3+k_4+k_5, l_4=-k_5-k_4-1, l_5=2k_4+k_5\} $. Other cases are similar.

$``\subseteqq"$
If $(k_1,k_2,k_3,k_4,k_5)\in P_{c_1}(S(G_3))$, then we have $-k_5\in\mathbb{P}, \ \min\{k_1,k_4\} <   -k_5,   \  k_1\geqslant k_4$ and $k_i \in \N (1\leqslant i \leqslant 4).$ We get $l_1=k_1-k_4 \in \N$ and $l_4=-k_5-k_4-1 \in \N$. In addition, by Table \ref{tab:kproperty} in Lemma \ref{lem:8-12}, we get $l_2=k_2+k_4+k_5, l_3=k_3+k_4+k_5$ and $l_5=2k_4+k_5$ are all in $\N$.

$``\supseteqq"$
If $(l_1,l_2,l_3,l_4,l_5) \in \{(l_1,\dots, l_5)\in \N^5:l_1=k_1-k_4, l_2=k_2+k_4+k_5, l_3=k_3+k_4+k_5, l_4=-k_5-k_4-1, l_5=2k_4+k_5\} $, then we get $l_1=k_1-k_4\in \N$ and $l_4=-k_5-k_4-1\in \N$. So $k_1\geqslant k_4$ and $\min\{k_1,k_4\}=k_4$. Also, by inversely solving the $k_i$'s from the $l_i$'s, we obtain
$k_5=-2l_4-l_5-2<0$. Hence, $-k_5\in\mathbb{P}$ and $\min\{k_1,k_4\}=k_4 < -k_5$. Thus, $(k_1,k_2,k_3,k_4,k_5)\in P_{c_1}(S(G_3))$.
\end{proof}

Now we are ready to state and prove our result.
\begin{theo}\label{theo:8-12}
Let $G_3$ be the graph in Figure 3. Then any $\c\in S(G_3)$ can be uniquely written in one of the following five types.
   \begin{enumerate}
     \item [$T_a:$]  $l_1\c_1+l_2\c_2+l_3\c_3+l_4\c_4+l_5\c_5;$
     \item [$T_b:$]  $\c_8+l_1\c_1+l_2\c_2+l_3\c_3+l_4\c_4+l_5\c_8;$
     \item [$T_{c_1}:$] $\c_6+l_1\c_1+l_2\c_2+l_3\c_3+l_4\c_6+l_5\c_8;$
     \item [$T_{c_2}:$] $\c_4+\c_6+l_1\c_2+l_2\c_3+l_3\c_4+l_4\c_6+l_5\c_8;$
     \item [$T_{c_3}:$] $\c_7+l_1\c_2+l_2\c_3+l_3\c_4+l_4\c_6+l_5\c_7,$
   \end{enumerate}
   where $\c_j,\ 1\le j \le 8$ are given by Equation \eqref{ceq3} and $l_i \in \mathbb{N}, \ 1\le i\le 5$.
\end{theo}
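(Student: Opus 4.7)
The plan is to bootstrap off Lemmas \ref{lem:8-12} and \ref{lem:8-12-2} together with the substitution rules in \eqref{tab:gammarelation}. By Lemma \ref{lem:8-12}, every $\c\in S(G_3)$ has a unique expression $\c=k_1\c_1+k_2\c_2+k_3\c_3+k_4\c_4+k_5\c_5$ whose coefficients satisfy property $P$. As already observed, the region $P(\R^5)$ (and hence $P(S(G_3))$) decomposes as the disjoint union of the five subsets $P_a,P_b,P_{c_1},P_{c_2},P_{c_3}$, so it suffices to put this representation into form $T_h$ on the corresponding $P_h$ cell for each $h\in\{a,b,c_1,c_2,c_3\}$.

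The normalization is straightforward cell by cell. On $P_a$, all $k_i$ are nonnegative and $\c$ is already of type $T_a$ with $l_i=k_i$. On $P_b$, apply relation $(b)$ to replace $k_5\c_5$ by $k_5(\c_1+\c_4-\c_8)$ and absorb one copy of $\c_8$ as the fixed prefix, producing type $T_b$ with $(l_1,\ldots,l_5)=(k_1+k_5,k_2,k_3,k_4+k_5,-k_5-1)$. On $P_{c_1}$, use the pair of relations labelled $(c_1)$ to eliminate $\c_4$ and $\c_5$ in favor of the non-basis extreme rays, and pull out one copy of $\c_6$. On $P_{c_2}$ and $P_{c_3}$ use the pairs $(c_2)$ and $(c_3)$ to eliminate $\c_1$ and $\c_5$, then pull out $\c_4+\c_6$ and $\c_7$ as the respective fixed prefixes. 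Lemma \ref{lem:8-12-2} is designed precisely so that the five resulting coefficients match the entries of Table \ref{tab:lvalue1} and are nonnegative on the corresponding cell.

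The opposite direction ``a representation of type $T_h$ gives an element of $P_h(S(G_3))$'' is handled by reversing the substitution: use \eqref{tab:gammarelation} backwards to rewrite the $T_h$ expression as $\sum k_i\c_i$, read off each $k_i$ as an explicit linear function of the $l_i$'s, and verify that the inequalities defining $P_h$ hold for every $(l_1,\ldots,l_5)\in\N^5$. This inversion simultaneously establishes uniqueness within each type, and uniqueness across types follows from the disjointness of the cells $P_h$.

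The main obstacle is keeping the bookkeeping straight across the three $c$-subcases. The dichotomy $k_1\ge k_4$ versus $k_1<k_4$, and within the latter $2k_1+k_5\ge 0$ versus $2k_1+k_5<0$, is forced by requiring that the coefficient of the ``new'' extreme ray ($\c_8$ in $T_{c_1}$ and $T_{c_2}$, and $\c_7$ in $T_{c_3}$) after substitution be nonnegative, and the factor $2$ in $2k_1+k_5$ comes from the coefficient $2\c_8$ appearing in the $(c_2)$ identity. Checking that these thresholds are sharp, so that no tuple $(l_1,\ldots,l_5)\in\N^5$ simultaneously fits two different $T_h$ templates, is the essential combinatorial content once the algebra of the substitutions has been carried out.
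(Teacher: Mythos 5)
Your proposal is correct and is essentially the paper's own argument: write $\c$ uniquely in the basis $\c_1,\dots,\c_5$ (Lemma \ref{lem:8-12}), split $S(G_3)$ along the disjoint cells $P_a,P_b,P_{c_1},P_{c_2},P_{c_3}$, rewrite each case with the relations of \eqref{tab:gammarelation} pulling out the indicated fixed prefix, and let Lemma \ref{lem:8-12-2} supply the two-way correspondence with $\N^5$, with uniqueness coming from the disjointness of the cells and the invertibility of the substitutions. One small caution when you carry out the $P_{c_2}$ step: the first relation listed under $(c_2)$ should read $\c_1=\c_2+\c_3-\c_4-\c_6+2\c_8$ (a $-\c_4$ term is missing in the printed table), and it is this corrected identity that produces the entry $l_3=k_4-k_1-1$ of Table \ref{tab:lvalue1}.
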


\begin{proof}
Let $\c\in S(G_3)$ be written as
\begin{equation}\label{eq:c-represent}
\c=k_1\c_1+k_2\c_2+k_3\c_3+k_4\c_4+k_5\c_5, \qquad k_1,\dots, k_5\in \R.
\end{equation}
Then $\c\in P_h(S(G_3))$ for $h\in \{a,b,c_1,c_2,c_3\}$.

We can use Table \ref{tab:gammarelation} to rewrite $\c$. The results are given in the following table.

\begin{equation*}\label{tab:gammasubs}
\begin{tabular}{l|llrr}
\toprule
Types        &     \ \ \ \ \ \ \ \ \ \ \ \ \ \ \ \ \ \ $\c$           \\
\midrule
$P_a(S(G_3))$         &$k_1\c_1+k_2\c_2+k_3\c_3+k_4\c_4+k_5\c_5$       \\
\midrule
$P_b(S(G_3))$         &$\c_8+(k_1+k_5)\c_1+k_2\c_2+k_3\c_3+(k_4+k_5)\c_4+(-k_5-1)\c_8$  \\
\midrule
$P_{c_1}(S(G_3))$         &$\c_6+(k_1-k_4)\c_1+(k_2+k_4+k_5)\c_2+(k_3+k_4+k_5)\c_3$  \\
              &\ \ \ \ \ \ \ \ \ \ \ \ \  \ \ \ \ \ \ \ \ \ \ \ \ \ \ \  \ \ \ \ \ \ \ \ \ \ \ $+(-k_5-k_4-1)\c_6+(2k_4+k_5)\c_8$  \\
\midrule
$P_{c_2}(S(G_3))$         &$\c_4+\c_6+(k_2+k_1+k_5)\c_2+(k_3+k_1+k_5)\c_3+(k_4-k_1-1)\c_4$     \\
              &\ \ \ \ \ \ \ \ \ \ \ \ \  \ \ \ \ \ \ \ \ \ \ \ \ \ \ \  \ \ \ \ \ \ \ \ \ $+(-k_5-k_1-1)\c_6+(2k_1+k_5)\c_8$  \\
\midrule
$P_{c_3}(S(G_3))$         &$\c_7+(k_2+k_1+k_5)\c_2+(k_3+k_1+k_5)\c_3+(k_4+k_1+k_5)\c_4+k_1\c_6$      \\
              &\ \ \ \ \ \ \ \ \ \ \ \ \  \ \ \ \ \ \ \ \ \ \ \ \ \ \ \  \ \ \ \ \ \ \ \ \ \  $+(-k_5-2k_1-1)\c_7$  \\
\bottomrule
\end{tabular}
\end{equation*}

By Lemma \ref{lem:8-12-2}, $P_h(S(G_3))$ is transformed exactly to type $T_h$ for each $h$.

\end{proof}

\begin{cor}\label{cor-GF-G3}
Let $G_3$ be as above. Then we have the decomposition
$$F^{G_3}(\x,y)=\sum_{\c \in S(G_3)}\x^{\c}y^{s(\c)}=\sum_{i=1}^{5}F_i^{G_3}(\x,y), \qquad\text{where}$$
\begin{enumerate}\label{Solution8-1211}
\item [] $F_1^{G_3}(\x,y)=\frac{1}{(1-\x^{\c_{1}}y)(1-\x^{\c_{2}}y)(1-\x^{\c_{3}}y)(1-\x^{\c_{4}}y^{2})(1-\x^{\c_{5}}y^{2})};$\\
\item [] $F_2^{G_3}(\x,y)=\frac{\x^{\c_{8}}y}{(1-\x^{\c_{8}}y)(1-\x^{\c_{2}}y)(1-\x^{\c_{3}}y)(1-\x^{\c_{1}}y)(1-\x^{\c_{4}}y^{2})};$\\
\item [] $F_3^{G_3}(\x,y)=\frac{\x^{\c_{6}}y}{(1-\x^{\c_{6}}y)(1-\x^{\c_{8}}y)(1-\x^{\c_{2}}y)(1-\x^{\c_{3}}y)(1-\x^{\c_{1}}y)};$\\
\item [] $F_4^{G_3}(\x,y)=\frac{\x^{(\c_{4}+\c_{6})}y^3}{(1-\x^{\c_{3}}y)(1-\x^{\c_{2}}y)(1-\x^{\c_{8}}y)(1-\x^{\c_{6}}y)(1-\x^{\c_{4}}y^{2})};$
\item [] $F_5^{G_3}(\x,y)=\frac{\x^{\c_{7}}y^{2}}{(1-\x^{\c_{6}}y)(1-\x^{\c_{2}}y)(1-\x^{\c_{3}}y)(1-\x^{\c_{4}}y^{2})(1-\x^{\c_{7}}y^{2})}.$
\end{enumerate}

Consequently,
\begin{align*}
 F^{G_3}(y) &=\frac{1+2\,y+4\,{y}^{2}+2\,{y}^{3}+{y}^{4}} {\left({1-y}\right)^{3}\left({1-{y}^{2}}\right)^{2}}=1+5\,y+18\,{y}^{2}+46\,{y}^{3}+101\,{y}^{4}+193\,{y}^{5}+\cdots.
\end{align*}
\end{cor}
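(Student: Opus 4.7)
The plan is to derive Corollary \ref{cor-GF-G3} as an immediate consequence of Theorem \ref{theo:8-12}. The theorem presents $S(G_3)$ as the disjoint union of five shifted free sub-monoids, one per type $T_h$ with $h \in \{a, b, c_1, c_2, c_3\}$. Hence the multivariate generating function splits as
\begin{equation*}
F^{G_3}(\x,y) \;=\; \sum_{h}\ \sum_{\c \in T_h} \x^{\c} y^{s(\c)},
\end{equation*}
and it suffices to evaluate the inner sum in closed form for each $h$.

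First I would note that $s(\cdot)$ is linear on $S_\R(G_3)$: since $G_3$ is $3$-regular on $8$ vertices, summing \eqref{eqRegularGraph8-12} over all vertices yields $8s = 2\sum_{k=1}^{12} x_k$, so $s(\c) = \tfrac14 \sum_k \c_k$. From the explicit vectors in \eqref{ceq3} this gives $s(\c_i) = 1$ for $i \in \{1,2,3,6,8\}$ and $s(\c_i) = 2$ for $i \in \{4,5,7\}$. Next, since each type $T_h$ parametrizes its elements uniquely as $\gamma_h + \sum_{i=1}^{5} l_i \a_i^{(h)}$ with $l_i \in \N$ and the $\a_i^{(h)}$ chosen from the $\c_j$'s, linearity of $s$ together with $\x^{\a+\b} = \x^{\a}\x^{\b}$ factors the inner sum as a product of five geometric series, yielding
\begin{equation*}
F_h^{G_3}(\x,y) \;=\; \frac{\x^{\gamma_h}\, y^{s(\gamma_h)}}{\prod_{i=1}^{5}\bigl(1-\x^{\a_i^{(h)}} y^{s(\a_i^{(h)})}\bigr)}.
\end{equation*}
Reading off $\gamma_h$ and the $\a_i^{(h)}$ from Theorem \ref{theo:8-12} and substituting the magic sums just computed reproduces the five formulas $F_1^{G_3},\dots,F_5^{G_3}$ stated in the corollary exactly.

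For the univariate specialization $F^{G_3}(y) = F^{G_3}(\mathbf{1},y)$, I would set $\x = \mathbf{1}$, so that each $F_h^{G_3}$ collapses to a rational function whose denominator factors are only $(1-y)$'s and $(1-y^2)$'s. Placing all five over the common denominator $(1-y)^5(1-y^2)^2$, expanding and summing the resulting polynomial numerators, I expect the total numerator to factor as $(1-y)^2(1+2y+4y^2+2y^3+y^4)$; canceling the $(1-y)^2$ against the denominator leaves the claimed closed form $(1+2y+4y^2+2y^3+y^4)/\bigl((1-y)^3(1-y^2)^2\bigr)$. Expanding the first few coefficients $1, 5, 18, 46, 101, 193, \dots$ then serves as an independent sanity check.

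The main conceptual obstacle was already absorbed into Theorem \ref{theo:8-12}: once the shifted-free-monoid decomposition is in hand, every step above is purely mechanical. The only remaining care is in the final univariate polynomial arithmetic, where one must track the $(1-y)$ versus $(1-y^2)$ contributions from each piece and verify the $(1-y)^2$ cancellation that produces the stated denominator $(1-y)^3(1-y^2)^2$.
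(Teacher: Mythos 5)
Your proposal is correct and follows exactly the route the paper intends: the corollary is the direct generating-function translation of the five shifted free monoids in Theorem \ref{theo:8-12}, using linearity of the magic sum (so $s(\c_i)=1$ for $i\in\{1,2,3,6,8\}$ and $s(\c_i)=2$ for $i\in\{4,5,7\}$) to turn each type into a product of geometric series. Your univariate computation also checks out, since the combined numerator over $(1-y)^5(1-y^2)^2$ is $(1-y)^2(1+2y+4y^2+2y^3+y^4)$, giving the stated closed form.
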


\def\diag{\mathop{\mathrm{diag}}}
\def\Oeq{{\mathop{\mathrm{\Omega}}\limits_=}}
\def\Oge{{\mathop{\mathrm{\Omega}}\limits_\geq}}
\section{MacMahon's Partition Analysis and Magic Distinct Labellings
\label{sec:BasicIdeas}}

We first introduce the basic idea of MacMahon's partition analysis and discuss possible
applications of our results.

\subsection{MacMahon's Partition Analysis}

MacMahon's partition analysis was introduced by MacMahon in \cite{MacMahon}, and has been restudied by Andrews and his coauthors in a series of papers
starting with \cite{Andrews}. The Mathematica package \texttt{Omega} was developed in \cite{Andrews-Omega}. The main idea of MacMahon's partition analysis is to replace linear constraints by using new variables and MacMahon's Omega operators on formal series:
\begin{align*}
  \Oeq \sum_{k=-\infty}^\infty c_i \lambda^i &= c_0,\qquad
  \Oge \sum_{k=-\infty}^\infty c_i \lambda^i =\sum_{k=0}^\infty c_i.
\end{align*}
MacMahon's operators always acting on the $\lambda$ variables, which
will be clear from the context.
We explain to how to compute $F^{G_1}(\x,y)$ in Example 1.
By \eqref{eqRegularGraph4-6}, we have
\begin{align*}
 F^{G_1}(\x,y)&= \sum_{(\a,s) \in \N^7}  x_1^{\alpha_1}\cdots x_6^{\alpha_6} y^{s} \Oeq\  \lambda_1^{\alpha_1+\alpha_4+\alpha_5-s} \lambda_2^{\alpha_1+\alpha_2+\alpha_6-s} \lambda_3^{\alpha_2+\alpha_3+\alpha_5-s} \lambda_4^{\alpha_3+\alpha_4+\alpha_6-s} \\
 &=\Oeq\  \frac{\lambda_{{1}} \lambda_{{2}} \lambda_{{3}} \lambda_{{4}} } {D_1(\x,y)}, \text{ where }
\end{align*}
\begin{scriptsize}
\begin{equation*}
D_1(\x,y)=\left({1-\lambda_{{1}}\lambda_{{4}}x_{{1}}}\right)\left({1-\lambda_{{1}}\lambda_{{2}}x_{{2}}}\right)
          \left({1-\lambda_{{2}}\lambda_{{3}}x_{{3}}}\right)\left({1-\lambda_{{3}}\lambda_{{4}}x_{{4}}}\right)
          \left({1-\lambda_{{2}}\lambda_{{4}}x_{{5}}}\right)\left({1-\lambda_{{1}}\lambda_{{3}}x_{{6}}}\right)
          \left({-y+\lambda_{{1}}\lambda_{{2}}\lambda_{{3}}\lambda_{{4}}}\right).
\end{equation*}
\end{scriptsize}
Eliminating the $\lambda$'s will give a representation of $F^{G_1}(\x,y)$. The whole theory relies on unique series expansion of rational functions.
See \cite{xin-fast} for the field of iterated Laurent series and the partial fraction algorithm implemented by the Maple package \texttt{Ell}.
The maple package
\texttt{CTEuclid} in \cite{xin-Euclid} is better in most situations.

The normal form of $F^{G_1}(\x,y)$ (by Maple) already has combinatorial meaning.
The normal form of $F^{G_2}(\x,y)$ is
$$F^{G_2}=\frac{1-{y}^{3}x_{{1}}x_{{2}}x_{{3}}x_{{4}}x_{{5}}x_{{6}}x_{{7}}x_{{8}}x_{{9}}} {\left({1-yx_{{7}}x_{{8}}x_{{9}}}\right)\left({1-yx_{{2}}x_{{6}}x_{{9}}}\right)
\left({1-yx_{{1}}x_{{5}}x_{{8}}}\right)\left({1-yx_{{3}}x_{{4}}x_{{7}}}\right)\left({1-{y}^{2}x_{{1}}x_{{2}}x_{{3}}x_{{4}}x_{{5}}x_{{6}}}\right)},$$
which can be easily decomposed by inspection.

For the graph $G_3$, \texttt{CTEuclid} gives an expression of $F^{G_3}(\x,y)$ quickly, but the normal form of $F^{G_3}(\x,y)$ is
\begin{footnotesize}
\begin{equation*}
F^{G_3}(\x,y)= \frac{N(\x,y)}{(1-yx^{\c_1})(1-yx^{\c_2})(1-yx^{\c_3})(1-y^2x^{\c_4})(1-y^2x^{\c_5})(1-yx^{\c_6})(1-y^2x^{\c_7})(1-yx^{\c_8})},
\end{equation*}
\end{footnotesize}
where
\begin{footnotesize}
\begin{align*}
N(\x,y)=&1-{y}^{3}x_{{1}}x_{{2}}x_{{4}}x_{{5}}x_{{6}}x_{{7}}{x_{{8}}}^{2}{x_{{9}}}^{2}x_{{10}}x_{{12}}
-{y}^{3}x_{{1}}x_{{2}}x_{{3}}x_{{5}}x_{{6}}x_{{7}}x_{{8}}{x_{{9}}}^{2}{x_{{10}}}^{2}x_{{11}}\\
&-2\,{y}^{3}x_{{1}}x_{{2}}x_{{3}}x_{{4}}x_{{5}}x_{{6}}x_{{7}}x_{{8}}x_{{9}}x_{{10}}x_{{11}}x_{{12}}
-{y}^{4}{x_{{2}}}^{2}x_{{3}}x_{{4}}{x_{{5}}}^{2}{x_{{6}}}^{2}{x_{{7}}}^{2}x_{{8}}{x_{{9}}}^{2}x_{{10}}x_{{11}}x_{{12}}\\
&+{y}^{4}{x_{{1}}}^{2}x_{{2}}x_{{3}}x_{{4}}x_{{5}}x_{{6}}x_{{7}}{x_{{8}}}^{2}{x_{{9}}}^{2}{x_{{10}}}^{2}x_{{11}}x_{{12}}
+2\,{y}^{5}x_{{1}}{x_{{2}}}^{2}x_{{3}}x_{{4}}{x_{{5}}}^{2}{x_{{6}}}^{2}{x_{{7}}}^{2}{x_{{8}}}^{2}{x_{{9}}}^{3}{x_{{10}}}^{2}x_{{11}}x_{{12}}\\
&+{y}^{5}x_{{1}}{x_{{2}}}^{2}x_{{3}}{x_{{4}}}^{2}{x_{{5}}}^{2}{x_{{6}}}^{2}{x_{{7}}}^{2}{x_{{8}}}^{2}{x_{{9}}}^{2}x_{{10}}x_{{11}}{x_{{12}}}^{2}
+{y}^{5}x_{{1}}{x_{{2}}}^{2}{x_{{3}}}^{2}x_{{4}}{x_{{5}}}^{2}{x_{{6}}}^{2}{x_{{7}}}^{2}x_{{8}}{x_{{9}}}^{2}{x_{{10}}}^{2}{x_{{11}}}^{2}x_{{12}}\\
&-{y}^{8}{x_{{1}}}^{2}{x_{{2}}}^{3}{x_{{3}}}^{2}{x_{{4}}}^{2}{x_{{5}}}^{3}{x_{{6}}}^{3}{x_{{7}}}^{3}{x_{{8}}}^{3}{x_{{9}}}^{4}{x_{{10}}}^{3}{x_{{11}}}^{2}{x_{{12}}}^{2}, \end{align*}
\end{footnotesize}
is polynomial of 10 terms. It is not clear how to decompose $F^{G_3}(\x,y)$ as a sum of simple rational functions.
We guessed such a decomposition (in Corollary \ref{cor-GF-G3}) by certain criterion. The verification of the formula
by computer is easy.

We should mention that for some complicated graphs $G$, Maple will stuck when normal $F^G(\x,y)$.

We conclude the subsection by reporting the following result.
Let $G_4$ be given in Figure \ref{fig:RegularGraph6-9-2}, with 6 vertices and 9 edges.

\begin{figure}[h]
  \centering
  \includegraphics[width=3.6cm]{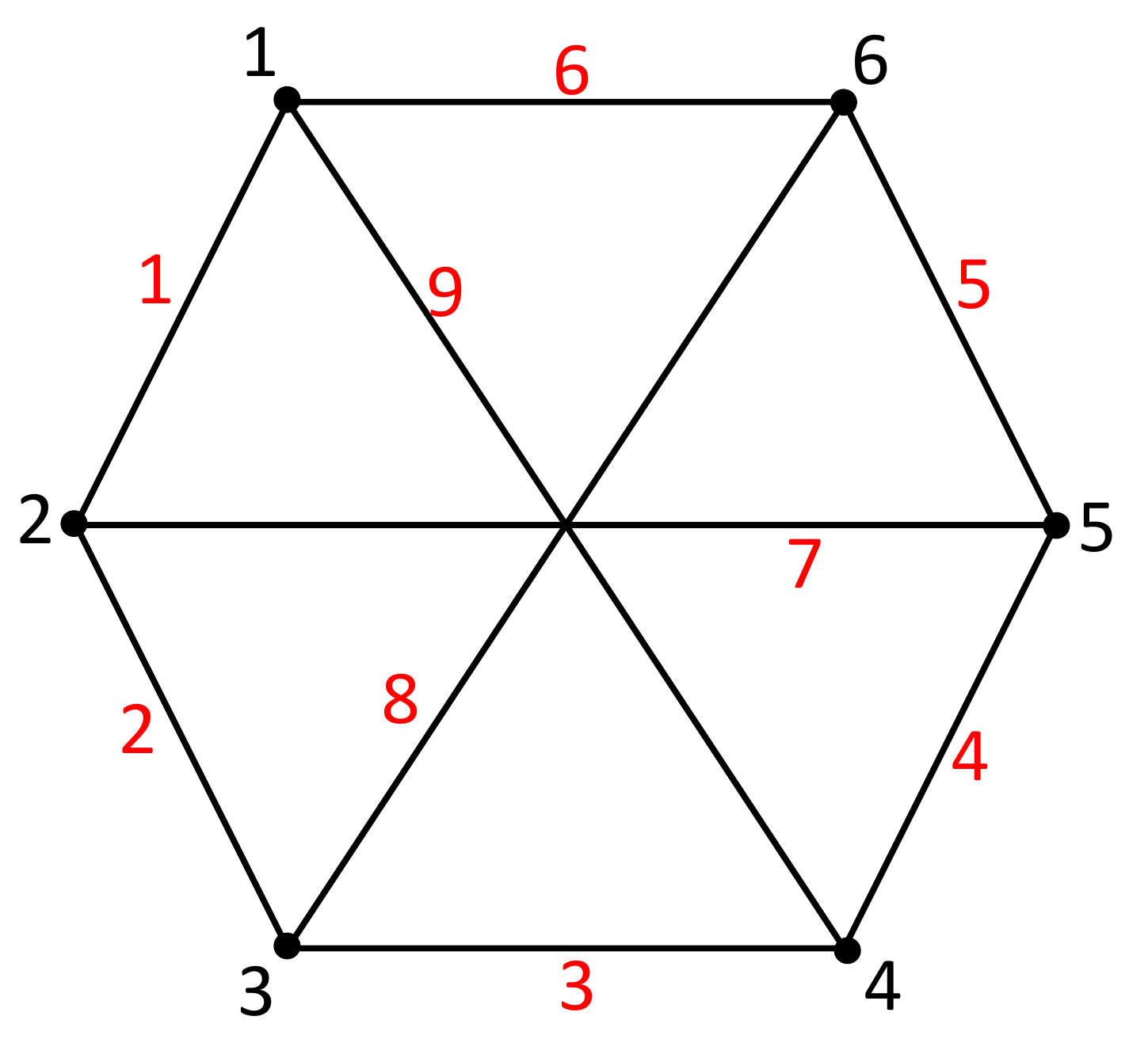}\vspace{-0.5cm}\\
  \caption{Regular graph $G_4$ with 6 vertices and 9 edges.
}\label{fig:RegularGraph6-9-2}
\end{figure}

Then
\begin{equation}\label{G4Decomposition}
\begin{split}
  F^{G_4}(\x,y)&= \frac{y x_{{2}} x_{{4}} x_{{6}} } {\left({1-yx_{{1}}x_{{4}}x_{{8}}}\right)\left({1-yx_{{2}}x_{{4}}x_{{6}}}\right)\left({1-yx_{{2}}x_{{5}}x_{{9}}}\right)\left({1-yx_{{3}}x_{{6}}x_{{7}}}\right)\left({1-yx_{{1}}x_{{3}}x_{{5}}}\right)}\\
  & +\frac{1} {\left({1-yx_{{1}}x_{{4}}x_{{8}}}\right)\left({1-yx_{{2}}x_{{5}}x_{{9}}}\right)\left({1-yx_{{3}}x_{{6}}x_{{7}}}\right)\left({1-yx_{{7}}x_{{8}}x_{{9}}}\right)\left({1-yx_{{1}}x_{{3}}x_{{5}}}\right)}\\
  &+\frac{y ^{2}x_{{2}} x_{{4}} x_{{6}} x_{{7}} x_{{8}} x_{{9}} } {\left({1-yx_{{7}}x_{{8}}x_{{9}}}\right)\left({1-yx_{{3}}x_{{6}}x_{{7}}}\right)\left({1-yx_{{2}}x_{{5}}x_{{9}}}\right)\left({1-yx_{{2}}x_{{4}}x_{{6}}}\right)\left({1-yx_{{1}}x_{{4}}x_{{8}}}\right)}.
\end{split}
\end{equation}
\begin{align}\label{G4-Fy}
 F^{G_4}(y) &=\frac{1+y+{y}^{2}} {\left({1-y}\right)^{5}}=1+6\,y+21\,{y}^{2}+55\,{y}^{3}+120\,{y}^{4}+231\,{y}^{5}+406\,{y}^{6}+\cdots.
\end{align}

It is not hard to give a combinatorial proof using similar ideas.

\subsection{Magic Labellings}
The complete generating function $F^G(\x,y)$ encodes almost all information of $S(G)$.

\medskip
Let $S^*(G)$ be the set of magic distinct labellings of $G$.
In some literature,
the technique condition ``positive" is added because it
is possible that any $\a \in S(G)$ must have label $0$ on some edges. For instance,
if $G$ is given by Figure \ref{fig:RegularGraph5-6 and fig:RegularGraph4-4} (a), then it is easy to check that $S(G)$ only contains the all $0$ labelling;
if $G$ is given by Figure \ref{fig:RegularGraph5-6 and fig:RegularGraph4-4} (b), then for any $\a \in S(G)$ the labels of 2 and 3 must be $0$. In deed,
we have $
F^{G_{5(b)}}(\x,y)= \frac{1} {1-yx_{{1}}x_{{4}}}.
$

\begin{figure}[htbp]
\centering
\subfigure[]{
\begin{minipage}[t]{0.25\linewidth}
\centering
\includegraphics[width=1.5in]{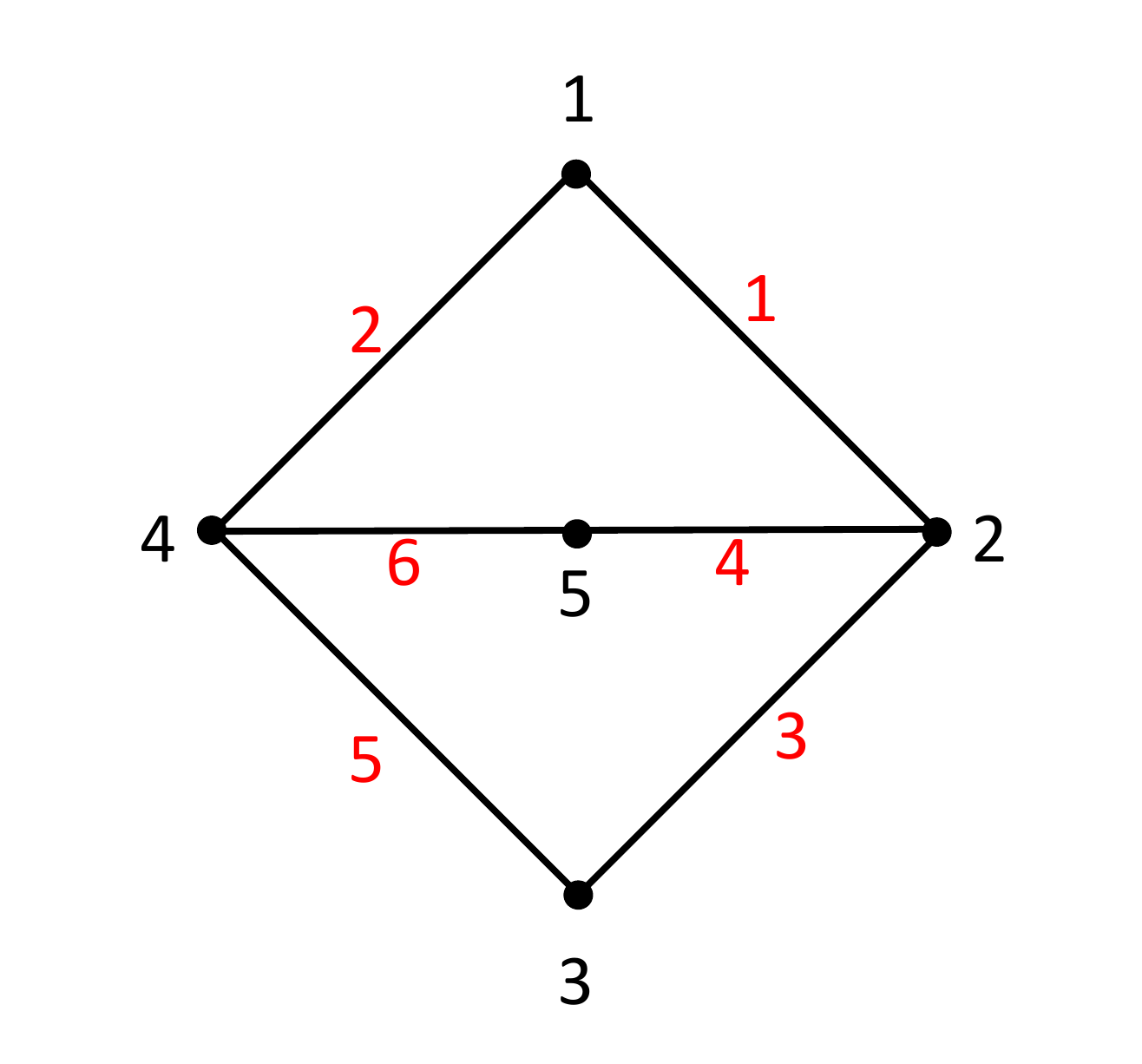}
\end{minipage}%
}\ \ \ \
\subfigure[]{
\begin{minipage}[t]{0.25\linewidth}
\centering
\includegraphics[width=1.5in]{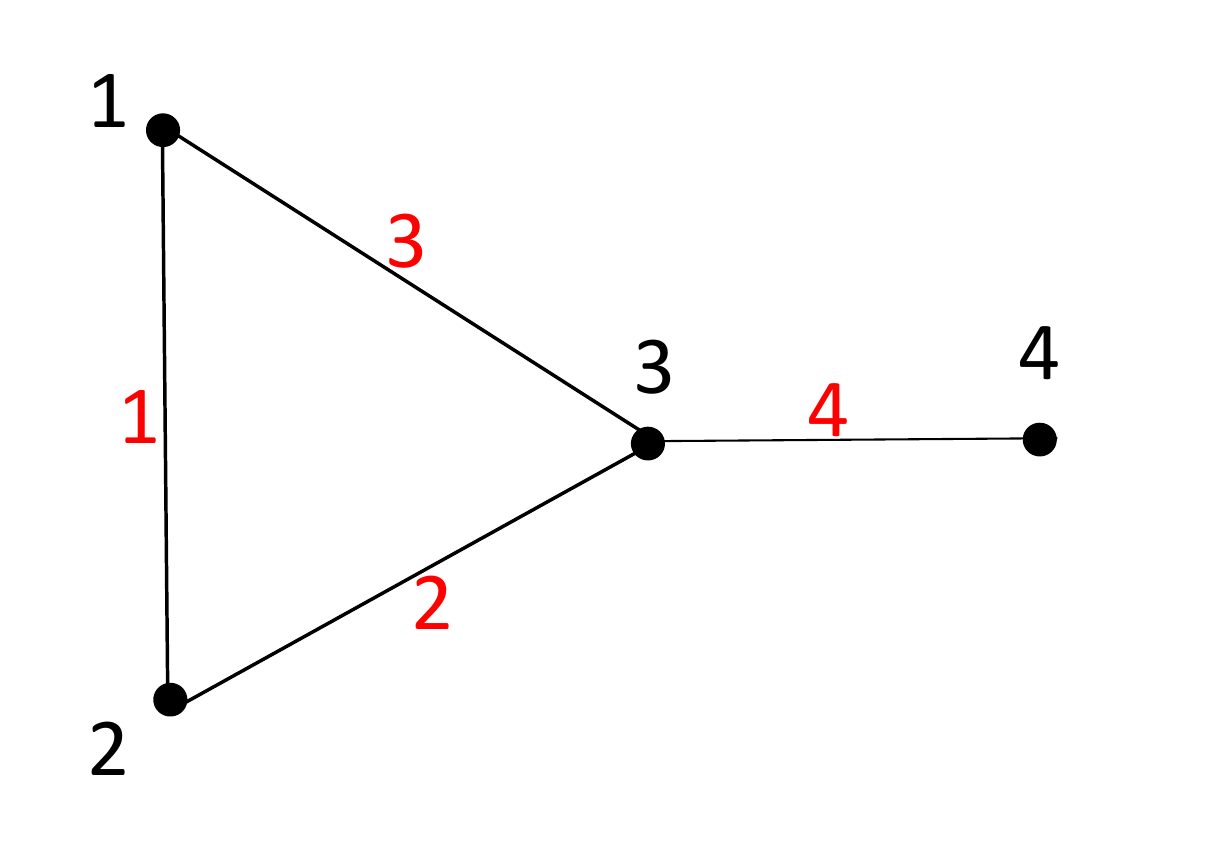}
\end{minipage}%
}%

\centering
\caption{Graph $G_{5(a)}$ with 5 vertices and 6 edges and graph $G_{5(b)}$ with 4 vertices and 4 edges.
}\label{fig:RegularGraph5-6 and fig:RegularGraph4-4}
\end{figure}

For a $d$ regular graph $G$, the all $1$
labelling $\mathbf{1}$ is magic with magic sum $d$. Thus $S_\P(G)= \mathbf{1}+S(G)=\{\mathbf{1}+\a: \a \in S(G)\}$.
In this sense, the positive condition makes no difference for magic labellings of
a regular graph.

The graphs $G_1,G_2,G_3$ in our examples are all regular graphs.
It is an accident that none of them have magic distinct labellings.
Indeed, they do not have magic distinct $\R$-labellings. To see this for $G_3$,
any $\c\in S_\R(G_3)$ can be written as in \eqref{e-gamma} for some $k_1,\dots, k_5\in \R$. Then the $a_2$ and $a_6$ labels
are the same. The situation for the other two graphs are similar: look at the
$a_1$ and $a_3$ labels for $G_1$, and the $a_1$ and $a_5$ labels for $G_2$.

In general, the structure of $S^*(G)$ is pretty complicated. It is obtained by slicing out
all $\a\in S(G)$ in the $\binom{n}{2}$ hyper planes $\alpha_i=\alpha_j, \ 1\le i<j \le n$.
Using inclusion and exclusion principle
will be too expensive since that will involve $2^{\binom{n}{2}}$ cases.
It is possible to obtain the generating function
$$E^G(\x,y)=\sum_{\a\in S^*(G)} \x^\a y^{s(G)} $$
by MacMahon's partition analysis.

Here we introduce two operators that can be realized by MacMahon's partition analysis.
If $A(x,y)=\sum_{i,j\ge 0}a_{i,j} x^i y^j$ is a formal power series in $x$ and $y$. Then the diagonal operator
defined by
$$ \diag_{x,y} A(x,y)= \sum_{i\ge 0}a_{i,i} x^i y^i =\sum_{i,j\ge 0, \ i-j=0} a_{i,j} x^i y^j   $$
can be realized by MacMahon's Omega (linear) operator.
We have
$$\diag_{x,y} A(x,y) =\sum_{i,j\ge 0} a_{i,j}   x^i y^j \Oeq\; \lambda^{i-j} =\Oeq \sum_{i,j\ge 0} a_{i,j}   (\lambda x)^i (y/\lambda)^j
=\Oeq A(\lambda x, y/\lambda).
$$

Similarly, if we define
$$ \diag_{x>y} A(x,y)= \sum_{i>j \ge 0}a_{i,j} x^i y^j =\sum_{i,j\ge 0, \ i-j-1\ge 0} a_{i,j} x^i y^j,   $$
then it can be realized by
$$\diag_{x>y} A(x,y) =\sum_{i,j\ge 0} a_{i,j}   x^i y^j \Oge \lambda^{i-j-1} =\Oge \sum_{i,j\ge 0} a_{i,j} \lambda^{-1}  (\lambda x)^i (y/\lambda)^j
=\Oge \lambda^{-1} A(\lambda x, y/\lambda).
$$

We have
$$E^G(\x,y)= \prod_{1\le i<j\le n} (1-\diag_{x_i,x_j})  F^G(\x,y),$$
whose expansion is just the inclusion and exclusion result.
We can normal the result after each application of $1-\diag_{x_i,x_j}$, provided that the
result would not explode, i.e., the numerator becomes too large for Maple to handel. Note that the computation highly relies on
the order of the operators. For instance, $(1-\diag_{x_2,x_6}) F^{G_3}(\x,y)=0$.
But the result quickly explodes for some orders.

Another way is to use the natural
decomposition
$$ S^*(G)= \biguplus_{\pi \in \mathfrak{S}_n} S^\pi(G),$$
where $\pi$ ranges over all permutations of $\{1,2,\dots,n\}$, and $S^\pi(G)$
consists of all $\a=(\alpha_1,\dots, \alpha_n)\in S(G)$ compatible with $\pi$, i.e.,
$\alpha_{\pi_1}>\alpha_{\pi_2}>\cdots >\alpha_{\pi_n}$.

The generating function $E^{G,\pi}(\x,y)$ of $S^\pi(G)$ can be extracted from $F^G(\x,y)$ by
applying MacMahon's Omega operator. We have
$$ E^{G,\pi}(\x,y) = \diag_{x_{\pi_1}>x_{\pi_2}} \cdots \diag_{x_{\pi_{n-1}}>x_{\pi_n}} F^G(\x,y). $$

For $F^{G_4}(\x,y)$ with the combinatorial decomposition as $F_1+F_2+F_3$ as in \eqref{G4Decomposition},
and for each $F_i, \ i=1,2,3$, we can extract
$$ E_i^\pi =  \diag_{x_{\pi_1}>x_{\pi_2}} \cdots \diag_{x_{\pi_{8}}>x_{\pi_9}} F_i(\x,y).$$
for any particular $\pi\in \mathfrak{S}_9$. Only $432=2^43^3$ out of $362880=9!$ permutations
give non-varnishing results. And the three sets of permutations do not overlap.
 Each result are simple rational functions with numerator either a monomial
or a binomial. For instance,

\begin{equation*}
E_1^{134568279} = \frac{x_{{4}} ^{7}x_{{2}} ^{3}x_{{6}} ^{5}y ^{15}x_{{1}} ^{10}x_{{3}} ^{8}x_{{5}} ^{6}x_{{8}} ^{4}x_{{7}} ^{2}\left({1-{x_{{4}}}^{7}{x_{{2}}}^{3}{x_{{6}}}^{5}{y}^{15}{x_{{1}}}^{10}{x_{{3}}}^{8}{x_{{5}}}^{6}{x_{{8}}}^{4}{x_{{7}}}^{2}}\right)} {D(\x,y)},
\end{equation*}
where
\begin{align*}
D(\x,y)=&\left({1-{y}^{3}x_{{1}}x_{{2}}x_{{3}}x_{{4}}x_{{5}}x_{{6}}x_{{7}}x_{{8}}x_{{9}}}\right)
        \left({1-{y}^{4}{x_{{1}}}^{3}x_{{2}}{x_{{3}}}^{2}{x_{{4}}}^{2}{x_{{5}}}^{2}x_{{6}}x_{{8}}}\right)\\
        &\times\left({1-{y}^{5}{x_{{1}}}^{3}x_{{2}}{x_{{3}}}^{3}{x_{{4}}}^{2}{x_{{5}}}^{2}{x_{{6}}}^{2}x_{{7}}x_{{8}}}\right)
        \left({1-{y}^{6}{x_{{1}}}^{4}x_{{2}}{x_{{3}}}^{3}{x_{{4}}}^{3}{x_{{5}}}^{2}{x_{{6}}}^{2}x_{{7}}{x_{{8}}}^{2}}\right)\\
        &\times\left({1-{y}^{7}{x_{{1}}}^{5}x_{{2}}{x_{{3}}}^{4}{x_{{4}}}^{3}{x_{{5}}}^{3}{x_{{6}}}^{2}x_{{7}}{x_{{8}}}^{2}}\right)
        \left({1-{y}^{8}{x_{{1}}}^{5}{x_{{2}}}^{2}{x_{{3}}}^{4}{x_{{4}}}^{4}{x_{{5}}}^{3}{x_{{6}}}^{3}x_{{7}}{x_{{8}}}^{2}}\right).
\end{align*}

There are total of $2^43^4=1296$ permutations $\pi$ such that $E^{G_4,\pi}(\x,y)\neq 0$.

As a consequence,
we have
\begin{align}\label{G4Egenerationfuction}
  &E^{G_4}(y)=\frac{72  y ^{12}\left({1-y}\right)^{2} N_4(y)} {\left({1-{y}^{3}}\right)^{2}\left({1-{y}^{4}}\right)\left({1-{y}^{5}}\right)\left({1-{y}^{6}}\right)\left({1-{y}^{7}}\right)\left({1-{y}^{8}}\right)}\\
 &= 72({y}^{12}+2\,{y}^{13}+4\,{y}^{14}+8\,{y}^{15}+12\,{y}^{16}+20\,{y}^{17}
+29\,{y}^{18}+42\,{y}^{19}+54\,{y}^{20}
+\cdots) \nonumber
\end{align}
where $N_4(y)$ is given by
\begin{align*}
&1+4\,y+11\,{y}^{2}+24\,{y}^{3}+44\,{y}^{4}+73\,{y}^{5}+109\,{y}^{6}+152\,{y}^{7}+192\,{y}^{8}+233\,{y}^{9}+258\,{y}^{10}\\
      &+274\,{y}^{11}+268\,{y}^{12}+249\,{y}^{13}+207\,{y}^{14}+166\,{y}^{15}+117\,{y}^{16}+79\,{y}^{17}+41\,{y}^{18}+18\,{y}^{19}.
\end{align*}

The 72 in the numerator seems a surprise. Observe that the symmetry group of $G_4$ is the Dihedral group $D_6$ which
is of cardinality $12$. Thus we have the following result.
\begin{cor}\label{cor-G4}
  Let $\widetilde{h}(s)$ be the number of magic distinct labellings of $G_4$ of magic sum $s$ up to isomorphism under the Dihedral group $D_6$. Then
it is divisible by $6$ for all $s$.
\end{cor}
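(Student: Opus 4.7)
The key ingredients are already in place: the generating function
$E^{G_4}(y)$ displayed in \eqref{G4Egenerationfuction} has an overall factor of $72$ in the numerator, and the Dihedral group $D_6$ has order $12$. The plan is to prove that $D_6$ acts \emph{freely} on the set of magic distinct labellings of $G_4$, whence every orbit has size exactly $12$, and therefore
\[
\widetilde{h}(s) \;=\; \frac{h(s)}{12},
\]
where $h(s)=[y^s]E^{G_4}(y)$. Since $72 \mid h(s)$ by \eqref{G4Egenerationfuction}, dividing by $12$ yields divisibility of $\widetilde{h}(s)$ by $6$.

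First I would set up the action of the automorphism group on labellings: for $\sigma \in D_6$ acting on $E(G_4)$ and any labelling $\mu : E \to \N$, one defines $(\sigma \cdot \mu)(e) = \mu(\sigma^{-1}(e))$. Magic labellings are preserved (since the weight condition is vertex-based and $\sigma$ permutes vertices and edges compatibly), and so is the magic sum, hence $D_6$ acts on the set $S^*(G_4)$ of magic distinct labellings with prescribed magic sum $s$.

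The core step is to check that the action is free. Suppose $\sigma \in D_6$ is non-identity and fixes some distinct labelling $\mu$. Then for every edge $e$, $\mu(\sigma(e)) = \mu(e)$. If $\sigma(e) \neq e$ for some edge, then two distinct edges would carry the same label, contradicting distinctness of $\mu$. Hence $\sigma$ must fix every edge of $G_4$. I would then verify by direct inspection of Figure~\ref{fig:RegularGraph6-9-2} that the only element of $D_6$ fixing every edge is the identity, i.e.\ the action of $D_6$ on $E(G_4)$ is faithful. This is the only step that uses specific information about $G_4$, and it should be a short case check: any non-trivial rotation of the hexagonal outer cycle visibly permutes outer edges non-trivially, and any reflection swaps a pair of edges.

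The main (and really the only) obstacle is this faithfulness check; once it is done, freeness of the action follows automatically on the distinct-labellings stratum, and the arithmetic conclusion $6 \mid \widetilde{h}(s)$ is immediate from the factor $72$ in the numerator of \eqref{G4Egenerationfuction} together with $|D_6|=12$. It is worth remarking that the argument does not require Burnside's lemma: freeness of the action is the stronger statement that directly yields an equal partition of $h(s)$ into orbits of size $12$, and from this both the integrality of $h(s)/12$ and the divisibility by $72/12=6$ follow without further computation.
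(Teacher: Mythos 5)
Your proposal is correct and follows essentially the same route as the paper: the paper simply asserts that $E^{G_4}(y)/12=\sum_{s\ge 0}\widetilde{h}(s)y^s$ (i.e.\ every $D_6$-orbit of distinct labellings has size $12$) and then reads off divisibility by $6$ from the factor $72$ in \eqref{G4Egenerationfuction}. Your freeness argument (a nontrivial automorphism fixing a distinct labelling would have to fix every edge, which is impossible since the edge action is faithful) is exactly the justification the paper leaves implicit behind the word ``Clearly''.
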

\begin{proof}
Clearly we have
$$E^{G_4}(y)/12= \sum_{s\ge 0} \widetilde{h}(s) y^s. $$
The corollary then follows by the formula \eqref{G4Egenerationfuction}.
\end{proof}

Corollary \ref{cor-G4} needs a combinatorial proof.
We list in Figure  \ref{G4:1-6} all non-isomorphic magic labellings of $G4$ with minimum magic sum $s=12$.

\begin{figure}[htbp]
\centering
\subfigure[]{
\begin{minipage}[t]{0.25\linewidth}
\centering
\includegraphics[width=1in]{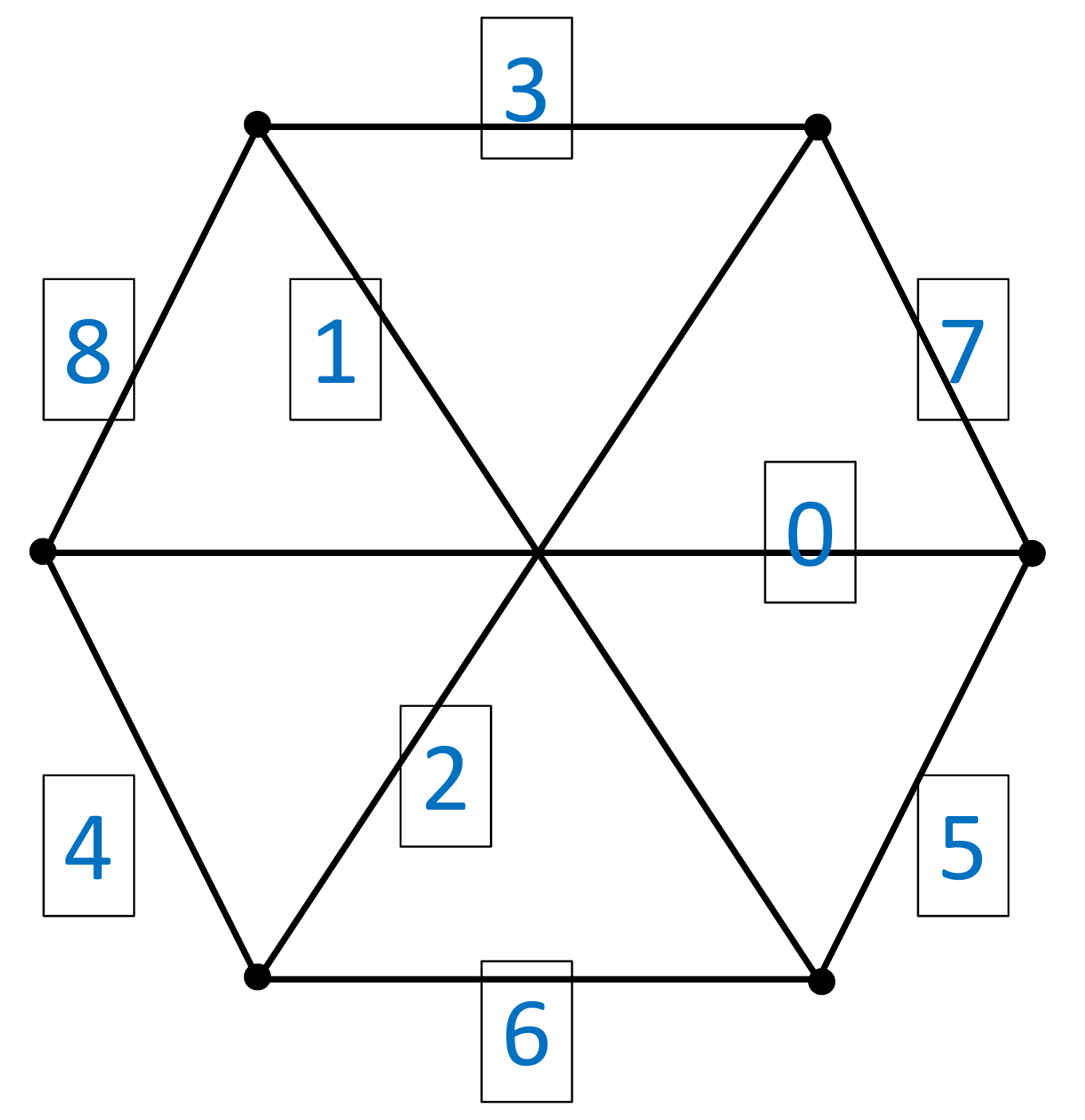}
\end{minipage}%
}
\subfigure[]{
\begin{minipage}[t]{0.25\linewidth}
\centering
\includegraphics[width=1in]{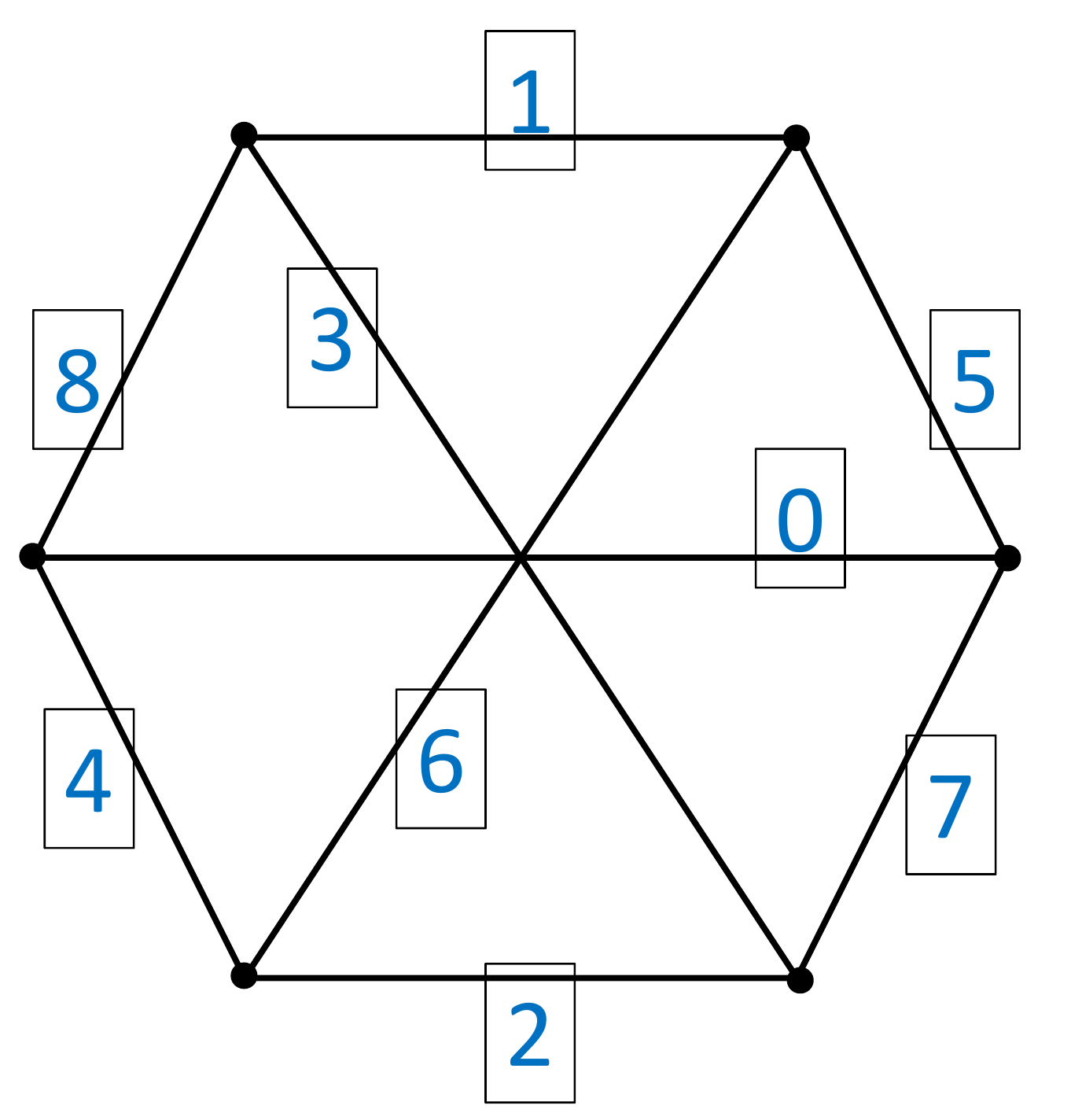}
\end{minipage}%
}%
\subfigure[]{
\begin{minipage}[t]{0.25\linewidth}
\centering
\includegraphics[width=1in]{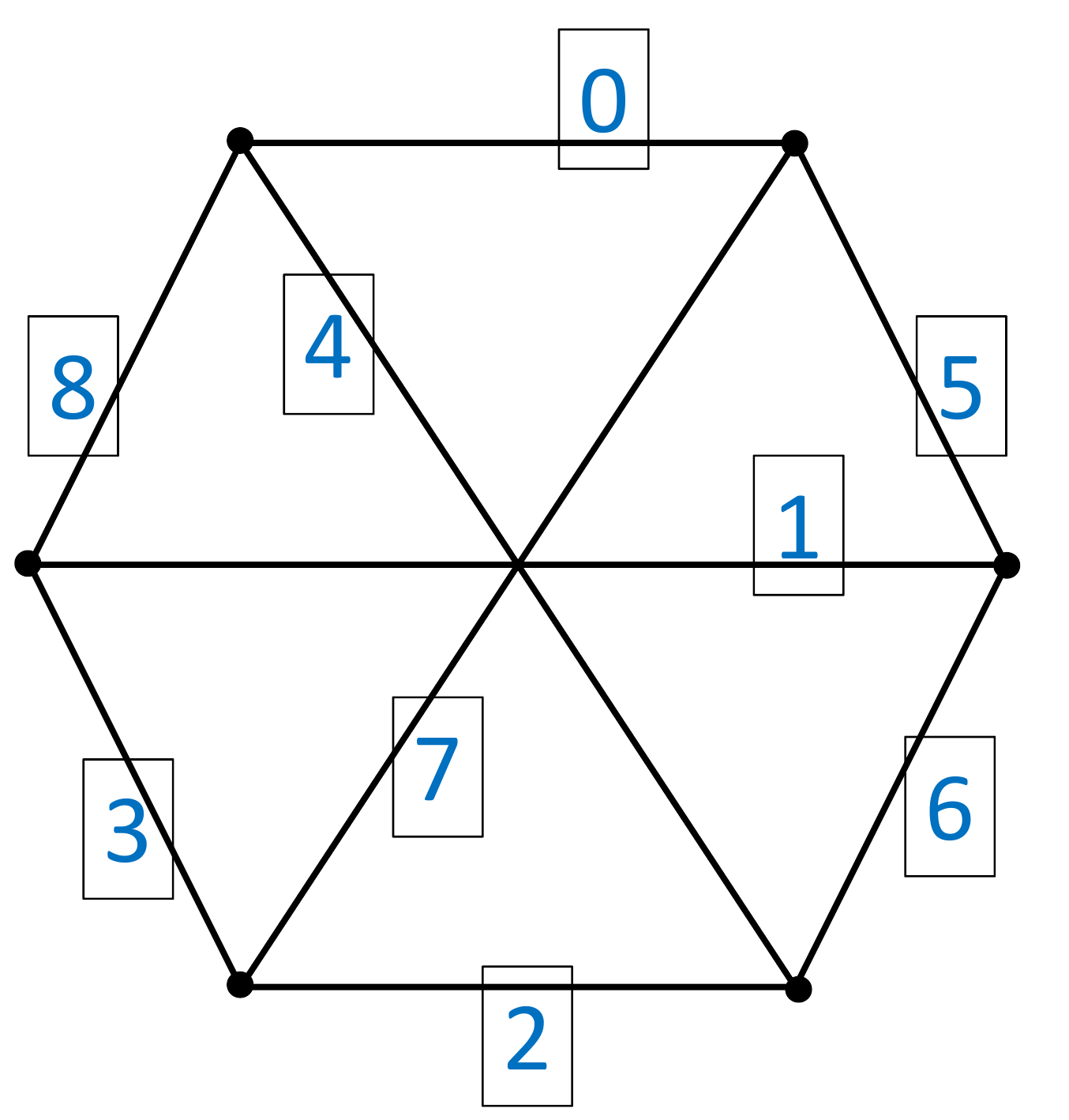}
\end{minipage}
}\\
\subfigure[]{
\begin{minipage}[t]{0.25\linewidth}
\centering
\includegraphics[width=1in]{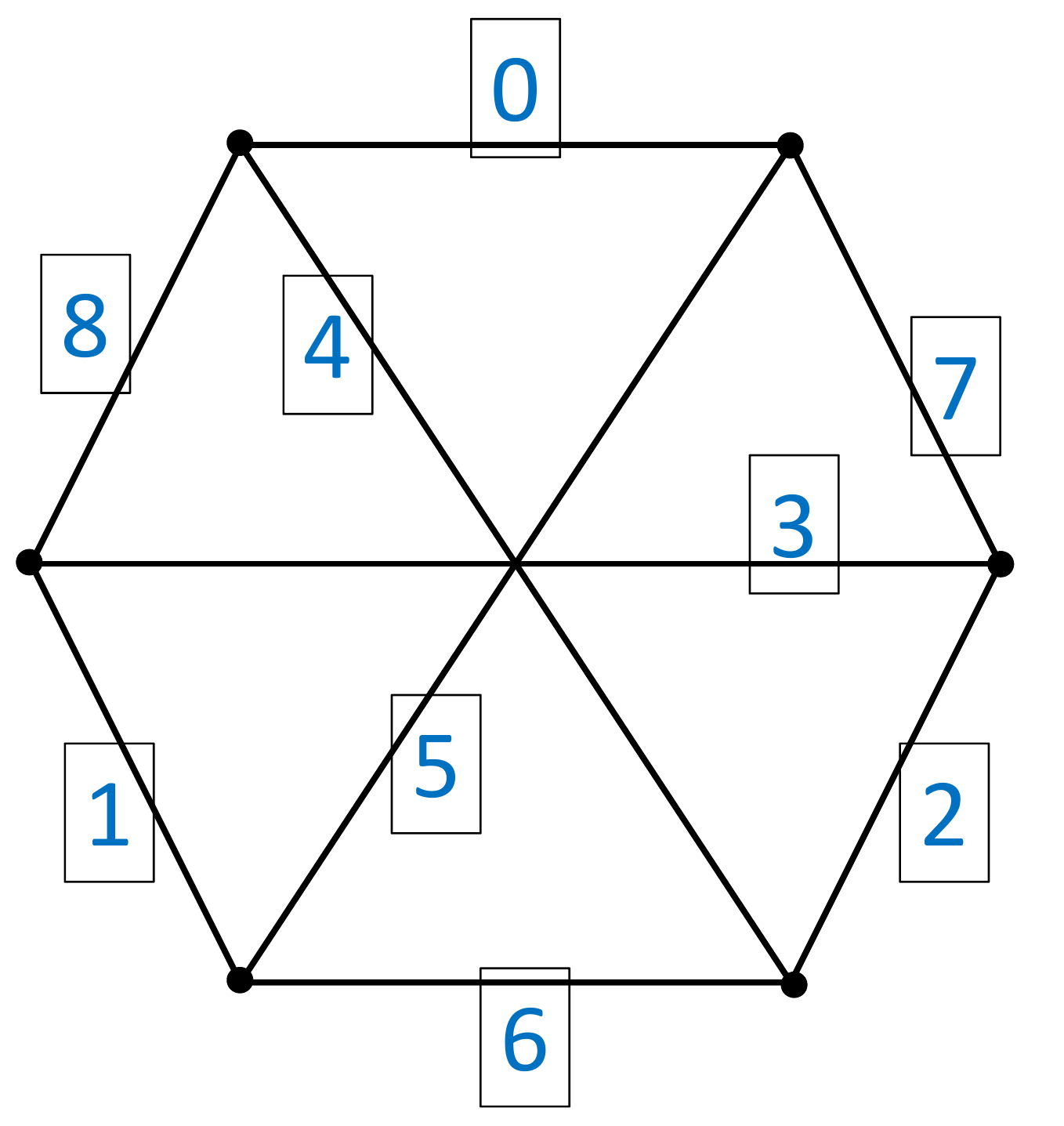}
\end{minipage}%
}%
\subfigure[]{
\begin{minipage}[t]{0.25\linewidth}
\centering
\includegraphics[width=1in]{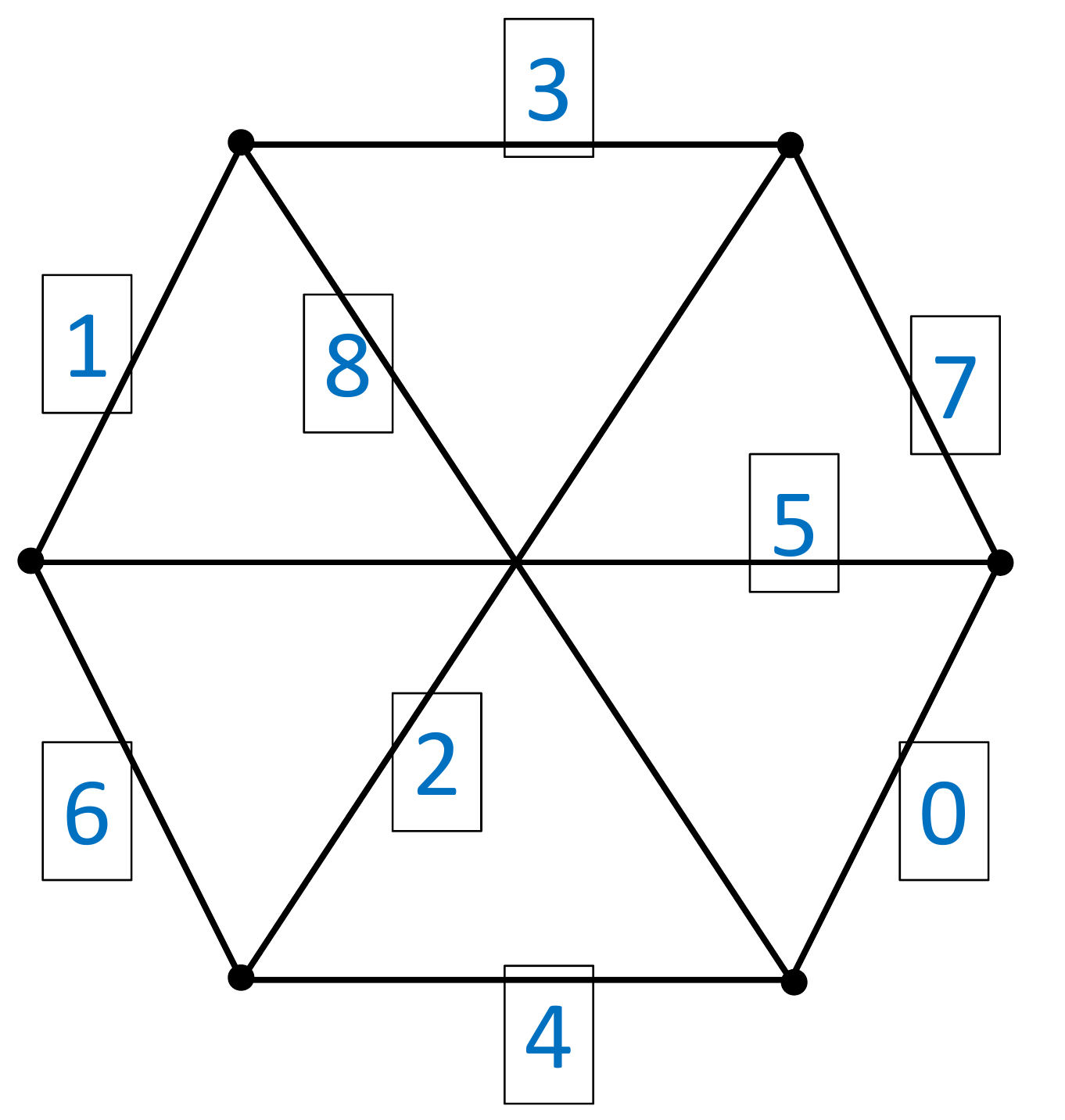}
\end{minipage}%
}%
\subfigure[]{
\begin{minipage}[t]{0.25\linewidth}
\centering
\includegraphics[width=1in]{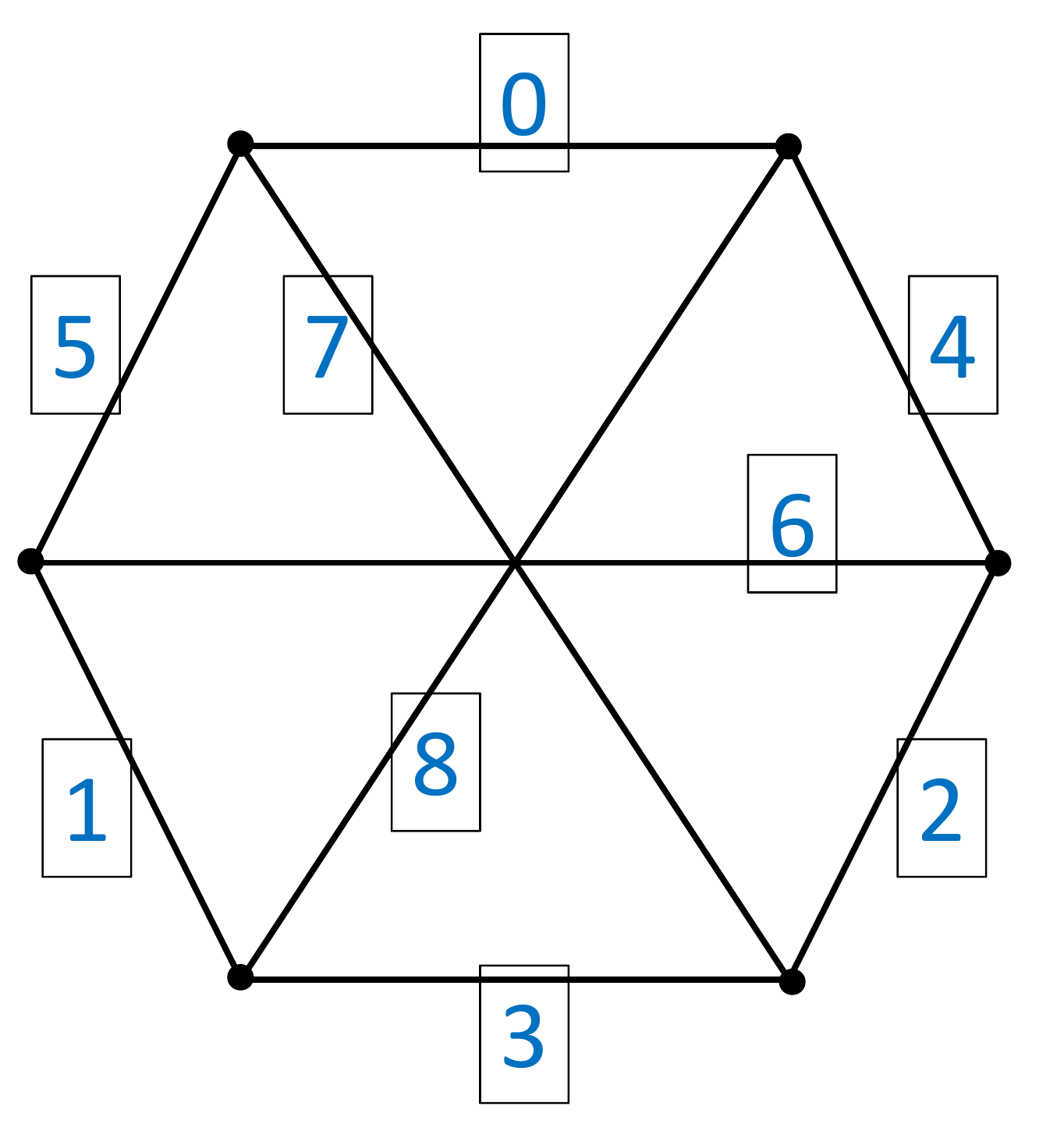}
\end{minipage}
}%
\centering
\caption{All non-isomorphic magic labellings of $G_4$ with minimum magic sum $s=12$.
The edges are labeled by the blue boxed numbers.
}\label{G4:1-6}
\end{figure}

\section{Concluding Remark \label{sec:Remark}}
We have studied the complete construction of magic labelling of graphs $S(G)$. Our aim is to decompose
$S(G)$ into some shifted free monoids. We have achieved this for four graphs, and give
combinatorial proofs of the decompositions.

In general, there are algorithms to compute the generating functions $F^G(\x,y)$. Then the decomposition
corresponds to algebraic decomposition of $F^G(\x,y)$. Such a decomposition seems easier to attack, and it is
a guide for combinatorial proofs.

Our approach to magic distinct labellings is by using MacMahon's partition analysis, especially the Maple
package \texttt{CTEuclid}. The package extracts constant term of an Elliott-rational function, i.e.,
a rational function whose denominator is a product of binomials. The number of binomials in the denominator
 affects the performance
of Maple significantly. This is why we prefer a good decomposition of $F^G(\x,y)$.

Magic distinct labellings of the cube was studied in \cite{Xin-cube}, where the cube has 8 vertices and 12 edges.
The generating function is more complicated than
that of $G_4$. It seems that the more edges the graph have, the more complex the generating function is.

Another direction is to restrict the number of vertices.
In an upcoming paper, we will report the results for all graphs with 5 vertices.

\end{document}